\documentclass[reqno]{amsart}
\usepackage{amsthm,amsmath,amssymb}
\usepackage{stmaryrd,mathrsfs}
\usepackage[T1]{fontenc}
\usepackage{esint}
\usepackage{tikz}

\allowdisplaybreaks


%
%

\newcommand{\ud}[0]{\,\mathrm{d}}

\newcommand{\abs}[1]{|#1|}
\newcommand{\Babs}[1]{\Big|#1\Big|}

\newcommand{\Norm}[2]{\|#1\|_{#2}}

\newcommand{\pair}[2]{\langle #1,#2 \rangle}

\newcommand{\ave}[1]{\langle #1\rangle}


\newcommand{\supp}[0]{\operatorname{supp}}


\newcommand{\R}{\mathbb{R}}

\newcommand{\N}{\mathbb{N}}
\newcommand{\Z}{\mathbb{Z}}



\swapnumbers \numberwithin{equation}{section}

\theoremstyle{plain}
\newtheorem{theorem}[equation]{Theorem}
\newtheorem{proposition}[equation]{Proposition}

\newtheorem{lemma}[equation]{Lemma}

\theoremstyle{definition}

\theoremstyle{remark}
\newtheorem{remark}[equation]{Remark}

\makeatletter
\@namedef{subjclassname@2010}{%
  \textup{2010} Mathematics Subject Classification}
\makeatother

%
%

\begin{document}

\title{The $A_2$ theorem: Remarks and complements}

\author[T.~P.\ Hyt\"onen]{Tuomas P.\ Hyt\"onen}
\address{Department of Mathematics and Statistics, P.O.B.~68 (Gustaf H\"all\-str\"omin katu~2b), FI-00014 University of Helsinki, Finland}
\email{tuomas.hytonen@helsinki.fi}

\date{\today}

\thanks{The author is supported by the European Union through the ERC Starting Grant ``Analytic--probabilistic methods for borderline singular integrals'', and by the Academy of Finland, grants 130166 and 133264.}

\keywords{}
\subjclass[2010]{42B20, 42B25}


\maketitle

\begin{abstract}
I give a mini-survey of several approaches to the $A_2$ theorem, biased towards the ``corona'' rather than the ``Bellman'' side of the coin. There are two new results (a streamlined form of Lerner's local oscillation formula, and the sharpness of the linear-in-complexity weak $(1,1)$ bound for dyadic shifts) and two new proofs of known results (the $A_p$--$A_\infty$ testing conditions, and the two-weight $T1$ theorem for positive dyadic operators).
\end{abstract}

\section{Introduction}

$A_2$ theorem for a Calder\'on--Zygmund operator $T$ is the sharp weighted bound
\begin{equation}\label{eq:A2}
  \textup{``$A_2$ for $T$'':}\qquad\Norm{Tf}{L^2(w)}\leq c_T[w]_{A_2}\Norm{f}{L^2(w)}
\end{equation}
in terms of the Muckenhoupt ``norm'' (or ``characteristic'', or ``constant'')
\begin{equation*}
  [w]_{A_2}:=\sup_Q\ave{w}_Q\ave{\frac{1}{w}}_Q:=\sup_Q\fint_Q w\cdot\fint_Q \frac{1}{w}
  :=\sup_Q\frac{\int_Q w}{\abs{Q}}\cdot\frac{\int_Q 1/w}{\abs{Q}}.
\end{equation*}
After being established for several particular operators first \cite{HLRSUV,Petermichl:Hilbert,PV:Beurling,Vagharshakyan}, I proved \eqref{eq:A2} for an arbitrary Calder\'on--Zygmund operator $T$ in July 2010 \cite{Hytonen:A2}. This was considered quite a difficult result back then. It used, in particular: a weighted $T1$ theorem of P\'erez, Treil and Volberg \cite{PTV}, permitting the reduction of \eqref{eq:A2} to the ``testing condition''
\begin{equation}\label{eq:testing}
  \textup{``testing $T$'':}\qquad 
  \begin{cases} \Norm{T(w^{-1}1_Q)}{L^2(w)}\leq c_T[w]_{A_2}w^{-1}(Q)^{1/2},  \\
     \Norm{T^*(w1_Q)}{L^2(w^{-1})}\leq c_T[w]_{A_2}w(Q)^{1/2}; \end{cases}
\end{equation}
a reduction of the operator $T$ to dyadic model operators called shifts $S_k$ via a probabilistic argument inspired by the work of  Nazarov, Treil and Volberg on non-doubling harmonic analysis \cite{NTV:Tb}; and  a subtle multi-step (``corona'') decomposition, elaborating on earlier work of Lacey, Petermichl and Reguera \cite{LPR}.
(Personally, I think that ``corona'' is a misused word in this context, since the original Corona Problem is rather distant, but I adopt this common terminology for this article.)

The last two years have greatly expanded our understanding of the $A_2$ theorem, and several known approaches are illustrated in the following diagram. The nodes ``$A_2$ for $T$'' and ``testing $T$'' (for different choices of $T$) represent intermediate results, as defined in \eqref{eq:A2} and \eqref{eq:testing}, whereas the arrows indicate different routes of passing from one intermediate result to the next. Whenever an arrow crosses a dashed line, it means that a corresponding auxiliary result is needed at that point. The further right one applies the weighted $T1$ theorem, the more difficult it becomes. To a smaller extent, this is also true for the corona decomposition.

The dotted lines indicate steps that are possible but unnecessary, since there is also a short direct proof of ``$A_2$ for $S_0^+$''; but I will say more about this path below.

\noindent
\begin{tikzpicture}
  \node (A2T) {$A_2$ for CZO};
  \node (testT) [node distance=2cm, above of=A2T] {testing CZO};
  \node (A2Sk) [node distance=3.2cm, left of=A2T] {$A_2$ for $S_k$};
  \node (testSk) [node distance=2cm, above of=A2Sk] {testing $S_k$};
  \node (testSp) [node distance=3cm, left of=testSk] {testing $S_k^+$};
  \node (A2Sp) [node distance=4cm, below of=testSp] {$A_2$ for $S_k^+$};
  \node (A2S0) [node distance=5.5cm, left of=A2Sk] {$A_2$ for $S_0^+$ (\emph{few-lines proof})};
  \node (testS0) [node distance=2cm, above of=A2S0] {testing $S_0^+$};
  \node (Cor) [above of=testSp, node distance=2cm] {\emph{corona decomposition}};
  \draw[->] (testT) to node {} (A2T);
  \draw[->] (testSk) to node {} (A2Sk);
  \draw[->] (testSp) to node {} (A2Sp);
  \draw[->] (testSk) to node {} (testT);
  \draw[->] (A2Sk) to node {} (A2T);
  \draw[->] (A2Sp) to node {} (A2Sk); 
  \draw[->] (A2S0) to node {} (A2Sp);
  \draw[->] (A2Sp) to node {} (A2T);
  \draw[->] (Cor) to node{} (testSp);
  \draw[->] (Cor) to node{} (testSk);
  \draw[->, densely dotted] (Cor) to node{} (testS0);
  \draw[->, densely dotted] (testS0) to node{} (A2S0);
  \node (A1) [left of=testT, node distance=1.5cm] {};
  \node (A2) [left of=A2T, node distance=1.5cm] {};
  \node (A3) [above of=A1, node distance=1cm] {\emph{random dyadic representation}};
  \node (A4) [below of=A2, node distance=0.5cm] {};
  \draw[-, dashed] (A3) to node {} (A4);
  \node (B1) [below of=testSp, node distance=1cm] {};
  \node (B2) [below of=testT, node distance=1cm] {};
  \node (B3) [right of=B1, node distance=1.5cm] {\emph{weighted $T1$}};
  \node (B4) [right of=B2, node distance=1cm] {};
  \node (B5) [left of=B1, node distance=3.2cm] {};
  \draw[-, dashed] (B3) to node {} (B4);
  \draw[-, dashed] (B3) to node {} (B5);
  \node (L) [below of=A2T] {\emph{Lerner's formula}};
  \node (L1) [left of=L, node distance=6.1cm] {};
  \node (L2) [left of=L1, node distance=0.3cm] {};
  \node (L3) [left of=L2, node distance=3cm] {};
  \draw[-, dashed] (L) to node {} (L1);
  \draw[-, dashed] (L2) to node {} (L3);
\end{tikzpicture}

In the diagram, CZO denotes an arbitrary Calder\'on--Zygmund operator, and $S_k$ an arbitrary dyadic shift of order $k\in\N$, as defined in \cite{Hytonen:A2,LPR}.  In the conditions ``$A_2$ for $S_k$'' and ``testing $S_k$'', as defined in \eqref{eq:A2} and \eqref{eq:testing}, it is understood that $c_{S_k}$ should grow at most polynomially in $k$. For the purposes of the present discussion, it is not necessary to recall the general definition of a dyadic shift, since we only explicitly deal with the following particular case: The symbol $S_k^+$ (of which $S_0^+$ is a special case) denotes a positive dyadic shift of order $k$ of the specific form
\begin{equation*}
  S_k^+ f=\sum_{K\in\mathscr{K}} 1_K\fint_{K^{(k)}}f\ud x,
\end{equation*}
where $K^{(k)}$ is the $k$ generations older dyadic ancestor of $K$ (so that $K^{(0)}:=K$), and $\mathscr{K}$ is an arbitrary sparse collection of dyadic cubes, i.e., there are pairwise disjoint subsets $E(K)\subset K$ with $\abs{E(K)}\geq c\abs{K}$ for a fixed constant $c>0$. 

My original proof of the $A_2$ theorem \cite{Hytonen:A2} proceeded via the ``top right'' route
 \begin{equation*}
  \textup{corona}\quad\to\quad\textup{testing }S_k\quad\to\quad\text{testing CZO}\quad\to\quad A_2\text{ for CZO},
\end{equation*}
where the last step was borrowed from P\'erez, Treil and Volberg \cite{PTV}.
This difficult step was avoided by the somewhat easier route
 \begin{equation}\label{eq:HPTV}
  \textup{corona}\quad\to\quad\textup{testing }S_k\quad\to\quad A_2\textup{ for }S_k\quad\to\quad A_2\text{ for CZO}.
\end{equation}
taken by Hyt\"onen--P\'erez--Treil--Volberg \cite{HPTV}. The estimates along this route were further elaborated by Hyt\"onen, Lacey, Martikainen et al. \cite{HLMORSU} to show that even the maximal truncated singular integrals
\begin{equation*}
  T_{\#}f(x):=\sup_{\epsilon>0}\abs{T_\epsilon f(x)},\qquad
  T_\epsilon f(x):=\int_{\abs{x-y}>\epsilon}K(x,y)f(y)\ud y,
\end{equation*}
can be reached, proving ``$A_2$ for $T_{\#}$.'' It was at this point that the $A_2$ technology was at the peak of its difficulty: In addition to the methods shown in the diagram, ideas coming from the proof of Carleson's theorem on pointwise convergence of Fourier series came into play. For a brief while in the development of the subject, it seemed that the two topics (sharp weighted inequalities and time--frequency analysis) are coming together, but it was soon realized that the elaborate time--frequency techniques were actually superfluous for the weighted theory---at least for most of the problems considered so far. (A notable exception is the work of Do and Lacey~\cite{DoLac}, which by its very nature must lie in the intersection of the two domains.)

The $A_2$ theorem for $T_{\#}$ was recovered, sharpened and greatly simplified by Hyt\"onen and Lacey's discovery \cite{HytLac} of the alternative route 
\begin{equation}\label{eq:HL}
  \textup{corona}\ \to\ \textup{testing }S_k^+\ \to\  A_2\textup{ for }S_k^+
  \ \to\  A_2\textup{ for }S_k\ \to\  A_2\text{ for CZO};
\end{equation}
surprisingly, the full $A_2$ theorem was reduced to positive operators, a theme further elaborated in 2012.
 Before going into these most recent developments, it should be mentioned that the corona and testing condition parts can also be replaced by alternative Bellman function arguments (like those by Nazarov and Volberg \cite{NazVol}), but I would say that they remain roughly on the same level of difficulty. 

However, both corona, testing, and Bellman functions were completely avoided by Lerner's discovery \cite{Lerner:posDyad} of
``$  A_2\textup{ for }S_0^+\ \to\ 
  A_2\textup{ for }S_k^+,$''
since the starting point, $A_2$ for the simplest operator $S_0^+$, can be directly verified by an elegant few-lines argument due to Cruz-Uribe, Martell and P\'erez \cite{CUMP}.
The final shortcut
``$  A_2\textup{ for }S_k^+\ \to\
  A_2\textup{ for CZO},$''
which even avoided the random dyadic representation, was independently found by Hyt\"onen--Lacey--P\'erez \cite{HLP} and Lerner~\cite{Lerner:simpleA2}.

Altogether, it now seems that the lower route to the $A_2$ theorem,
\begin{equation}\label{eq:lower}
  A_2\textup{ for }S_0^+\quad\to\quad
  A_2\textup{ for }S_k^+\quad\to\quad
  A_2\textup{ for CZO}, 
\end{equation}
is the easiest one available as of today. On the other hand, it also seems that for a number of closely related results, it is necessary to take some additional steps. Until recently, this was the case for the $A_p$ theorem
\begin{equation}\label{eq:Ap}
  \Norm{Tf}{L^p(w)}\leq c_{T,p}([w]_{A_p}+[w]_{A_p}^{1/(p-1)})\Norm{f}{L^p(w)},\qquad 1<p<\infty,
\end{equation}
which was originally deduced from the $A_2$ theorem after an additional extrapolation argument from \cite{DGPP}. It can also be obtained directly from some paths of the above diagram by changing ``$A_2$ for $T$'' to ``$A_p$ for $T$'' and modifying the ``testing $T$'' conditions accordingly: This was achieved via the route \eqref{eq:HPTV} by Hyt\"onen, Lacey, Martikainen et al. \cite{HLMORSU}, and via \eqref{eq:HL} by Hyt\"onen and Lacey~\cite{HytLac}. However, recently Moen~\cite{Moen} found a short direct proof of ``$A_p$ for $S_0^+$'', making the easy direct route \eqref{eq:lower} also available for the full $A_p$ theorem \eqref{eq:Ap}.

Still, it seems that for the mixed $A_p$--$A_\infty$ improvement of \eqref{eq:Ap},
\begin{equation}\label{eq:ApAinfty}
  \Norm{Tf}{L^p(w)}\leq c_{T,p}[w]_{A_p}^{1/p}([w]_{A_\infty}^{1/p'}+[\sigma]_{A_\infty}^{1/p})\Norm{f}{L^p(w)},\qquad \sigma:=w^{1-p'},
\end{equation}
an approach via the testing conditions and a weighted $T1$ theorem is necessary. The bound \eqref{eq:ApAinfty} was first obtained by Hyt\"onen--P\'erez \cite{HytPer} for $p=2$ via \eqref{eq:HPTV}, and then in general by Hyt\"onen--Lacey \cite{HytLac} via \eqref{eq:HL}. As pointed out by Lerner \cite[Sec.~2.2]{Lerner:posDyad}, this can be somewhat simplified to the lower-left route
\begin{equation}\label{eq:left}
    \textup{corona}\ \to\ \textup{testing }S_0^+\ \to\  A_p\textup{ for }S_0^+
  \ \to\  A_p\textup{ for }S_k^+\ \to\  A_p\text{ for CZO},
\end{equation}
although it still needs many of the same ideas as \eqref{eq:HL} in the easier case of $k=0$.

The goal of this paper is to further simplify this lower-left route \eqref{eq:left} to the $A_p$ theorem and the mixed estimate \eqref{eq:ApAinfty}. A detailed technical outline of this route is given in Section~\ref{sec:outline}. After this, the new contributions are as follows:
\begin{itemize}
  \item[Sec.~\ref{sec:Lerner}:] A streamlined form of Lerner's local oscillation formula, where a maximal function term from the original formulation is seen to be redundant.
  \item[Sec.~\ref{sec:weak11}:] An example showing the sharpness of the known weak $(1,1)$ bound for the operators $S_k^+$, a key lemma to prove that ``$A_2$ for $S_0^+$'' implies ``$A_2$ for $S_k^+$''.
  \item[Sec.~\ref{sec:testing}:] A direct verification of the $A_p$--$A_\infty$ testing conditions for $S_0^+$. This means that the corona decomposition (and Bellman function) is once again avoided.
  \item[Sec:~\ref{sec:T1}:] A (slight) variant of the proof of the two-weight $T1$ theorem (for the simplest positive operators $S_0^+$) that is still necessary to follow this route.
\end{itemize}

The new insight might give hints towards some related open questions, of which I mention the following: For all cubes $Q$, let $X_Q,Y_Q$ by Banach function spaces on $Q$, with duals $X_Q',Y_Q'$ with respect to the duality $\fint_Q fg\ud x$, and let $M_{X'} f(x):=\sup_{Q\owns x}\Norm{f}{X_Q'}$ and $M_{Y'}$ be defined similarly. In this set-up, Lerner \cite{Lerner:posDyad} has shown that the following two-weight condition is sufficient for the boundedness of $T(\,\cdot\,\sigma):L^p(\sigma)\to L^p(w)$ for an arbitrary Calder\'on--Zygmund operator $T$: 
\begin{equation*}
  \Big(\sup_Q\Norm{w^{1/p}}{X_Q}\Norm{\sigma^{1/p'}}{Y_Q}\Big)\Norm{M_{X'}}{\mathscr{B}(L^{p'})}\Norm{M_{Y'}}{\mathscr{B}(L^p)}<\infty.
\end{equation*}
Note that the $L^{p'}$-boundedness of $M_{X'}$ roughly means that ``$X_Q'$ has a weaker norm than $L^{p'}(Q)$'', hence ``$X_Q$ has a stronger norm than $L^p(Q)$'', so that $\Norm{w^{1/p}}{X_Q}$ is a bigger (``bumped up'') quantity than $\ave{w}_{Q}^{1/p}$. Although partial progress was achieved by Cruz-Uribe, Reznikov and Volberg \cite{CRV}, it is open if the following one-sided bump condition is sufficient:
\begin{equation*}
  \Big(\sup_Q\ave{w}_Q^{1/p}\Norm{\sigma^{1/p'}}{Y_Q}\Big)\Norm{M_{Y'}}{\mathscr{B}(L^p)}+
    \Big(\sup_Q\Norm{w^{1/p}}{X_Q}\ave{\sigma}_Q^{1/p'}\Big)\Norm{M_{X'}}{\mathscr{B}(L^{p'})}<\infty.
\end{equation*}
This last quantity, for $w,\sigma\in A_\infty$ and $X_Q=L^{p+\epsilon}(Q)$, $Y_Q=L^{p'+\epsilon}(Q)$, is dominated by the product of $A_p$ and $A_\infty$ norms in \eqref{eq:ApAinfty}. Thus the one-sided bump conjecture would recover the $A_p$ theorem, while the two-sided bump theorem does not.


\section{Detailed outline of the lower-left route}\label{sec:outline}

\subsection{Lerner's formula}
The key ingredient of the recent proofs of the $A_2$ theorem is Lerner's local oscillation formula from \cite{Lerner:posDyad,Lerner:formula}. 
It involves the following concepts:
\begin{itemize}
  \item The \emph{median} of a measurable function $f$ on a set $Q$ is any real number $m_f(Q)$ such that
  \begin{equation*}
  \abs{Q\cap\{f>m_f(Q)\}}\leq\tfrac12\abs{Q},\qquad
  \abs{Q\cap\{f<m_f(Q)\}}\leq\tfrac12\abs{Q}.
\end{equation*}
  \item The \emph{decreasing rearrangement} of $f$ is the nonnegative function
  \begin{equation*}
  f^*(t):=\inf\{\alpha\geq 0:\abs{\{\abs{f}>\alpha\}}\leq t\}
    =\inf_{E:\abs{E}\leq t}\Norm{f 1_{E^c}}{\infty},
\end{equation*}
where both infima are actually reached by $\alpha=f^*(t)$ and $E=\{\abs{f}>f^*(t)\}$.
  \item The \emph{oscillation} of $f$ on $Q$, off a $\lambda$-fraction, is
  \begin{equation*}
  \omega_\lambda(f;Q):=\inf_c(1_Q(f-c))^*(\lambda\abs{Q}).
\end{equation*}
\end{itemize}

The key properties of these objects a summarized in the following simple lemma:

\begin{lemma}\label{lem:median}
We have the estimates
\begin{equation*}
\begin{split}
  &\abs{m_f(Q)} \leq(1_Q f)^*(\nu\abs{Q})\qquad  \forall\nu\in(0,\tfrac12),\qquad
  f^*(t) \leq\frac{1}{t}\Norm{f}{L^{1,\infty}}\quad\forall t\in(0,\infty),\\  
  &(1_Q(f-m_f(Q)))^*(\nu\abs{Q}) \leq 2\omega_\nu(f;Q)\qquad  \forall\nu\in(0,\tfrac12).
\end{split}
\end{equation*}
\end{lemma}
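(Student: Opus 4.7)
\bigskip
\noindent\textbf{Proof proposal.} All three estimates are direct consequences of the definitions of median, decreasing rearrangement, and $\omega_\nu$; the plan is to unfold each definition and use only elementary set inclusions and the triangle inequality. Nothing in the argument is deep, and the trickiest step is the last, where the first estimate has to be reused for the translated function $f-c$.

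For the first estimate, I may assume by symmetry that $m_f(Q)\ge 0$ (for $m_f(Q)<0$ the argument is the same, interchanging the two defining inequalities of the median). The plan is to show that for every $\alpha<m_f(Q)$ the superlevel set $\{|1_Q f|>\alpha\}$ has measure strictly greater than $\nu\abs{Q}$, which by the definition of $f^*$ forces $(1_Q f)^*(\nu\abs{Q})\ge m_f(Q)=\abs{m_f(Q)}$. Indeed, the median inequality $\abs{\{f<m_f(Q)\}\cap Q\}}\le \tfrac12\abs{Q}$ gives $\abs{\{f\ge m_f(Q)\}\cap Q\}}\ge \tfrac12\abs{Q}>\nu\abs{Q}$, and on this set $1_Q f\ge m_f(Q)>\alpha$.

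The second estimate is Chebyshev/Markov in disguise: from $\Norm{f}{L^{1,\infty}}=\sup_{\alpha>0}\alpha\abs{\{\abs{f}>\alpha\}}$ I get $\abs{\{\abs{f}>\Norm{f}{L^{1,\infty}}/t\}}\le t$, so the infimum defining $f^*(t)$ is at most $\Norm{f}{L^{1,\infty}}/t$.

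For the third estimate, the key trick is to apply the first estimate not to $f$ but to $f-c$. Given any $c\in\R$, let $\beta:=(1_Q(f-c))^*(\nu\abs{Q})$. Since the median is translation-equivariant, $m_{f-c}(Q)=m_f(Q)-c$, and the first estimate applied to $f-c$ yields $\abs{m_f(Q)-c}\le\beta$. By the triangle inequality, if $x\in Q$ satisfies $\abs{f(x)-m_f(Q)}>2\beta$, then $\abs{f(x)-c}>2\beta-\beta=\beta$; consequently
\begin{equation*}
  \{\abs{1_Q(f-m_f(Q))}>2\beta\}\subset\{\abs{1_Q(f-c)}>\beta\},
\end{equation*}
and the latter set has measure at most $\nu\abs{Q}$ by the definition of $\beta$. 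Hence $(1_Q(f-m_f(Q)))^*(\nu\abs{Q})\le 2\beta$, and taking the infimum over $c$ gives the claimed bound. The main (and only) obstacle is to remember to apply the first estimate to the translated function $f-c$; after that the rest is book-keeping.
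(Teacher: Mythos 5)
The paper states this lemma without proof, labeling it a ``simple lemma,'' so there is no argument in the paper to compare yours against. Your proof is correct and is the standard one: the first estimate follows from $\abs{Q\cap\{f\ge m_f(Q)\}}\ge\tfrac12\abs{Q}>\nu\abs{Q}$ (and its mirror image when $m_f(Q)<0$), the second is Chebyshev's inequality, and the third correctly reuses the first estimate for the translated function $f-c$ together with the triangle inequality and the translation-equivariance of the median.
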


\begin{remark}
These estimates are in general invalid for $\nu=\tfrac12$ and \emph{some} medians. Indeed, consider $Q=[0,1)\subset\R^1$ and $f=1_{[0,\tfrac12)}$. Then any $c\in[0,1]$ is a median of $f$ on $Q$, but $(1_Q f)^*(\tfrac12\abs{Q})=0$, so the first bound is only true for the special median $m_f(Q)=0$. Likewise, one can check with either $c=0$ or $c=1$ that $\omega_{1/2}(f;Q)=0$, but $(1_Q(f-m_f(Q)))^*(\tfrac12\abs{Q})>0$ for all other medians $m_f(Q)\in(0,1)$.
\end{remark}

In Section~\ref{sec:Lerner} below, I prove Lerner's formula in the following form:

\begin{theorem}
For any measurable function $f$ on a cube $Q^0\subset\R^d$, we have
\begin{equation*}
  \abs{f(x)-m_f(Q^0)}\leq 2\sum_{L\in\mathscr{L}}\omega_\lambda(f;L)1_L(x),\qquad
  \lambda=2^{-d-2},
\end{equation*}
where $\mathscr{L}\subset\mathscr{D}(Q^0)$ is \emph{sparse}: there are pairwise disjoint \emph{major subsets} $E(L)\subset L$ with $\abs{E(L)}\geq\gamma\abs{L}$. In fact, we can take $\gamma=\tfrac12$.
\end{theorem}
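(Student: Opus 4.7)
The plan is to build the sparse family $\mathscr{L}$ by a Calder\'on--Zygmund-type stopping construction driven by the ``bad'' sets
\[
  \Omega_L := \{x \in L : \abs{f(x) - m_f(L)} > 2\omega_\lambda(f;L)\},
\]
which by the last estimate of Lemma~\ref{lem:median} satisfy $\abs{\Omega_L} \leq \lambda\abs{L}$. Starting from $\mathscr{L}_0 = \{Q^0\}$, I take as $\children(L)$ the maximal dyadic subcubes $L' \subseteq L$ with $\abs{L' \cap \Omega_L} \geq 2^{-d-1}\abs{L'}$, set $\mathscr{L} := \bigcup_k \mathscr{L}_k$, and define $E(L) := L \setminus \bigcup_{L' \in \children(L)} L'$. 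The choice of threshold $2^{-d-1}$ against $\lambda = 2^{-d-2}$ is calibrated so that
\[
  \sum_{L' \in \children(L)} \abs{L'} \leq \frac{\abs{\Omega_L}}{2^{-d-1}} \leq 2^{d+1} \cdot 2^{-d-2}\abs{L} = \tfrac12 \abs{L},
\]
giving $\abs{E(L)} \geq \tfrac12\abs{L}$. Disjointness of the $E(L)$ is automatic from the nested structure: any proper dyadic descendant of $L$ lies in some $L' \in \children(L)$ and hence in the complement of $E(L)$.

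The heart of the argument is a one-step pointwise bound,
\[
  \abs{f(x) - m_f(L)} 1_L(x) \leq 2\omega_\lambda(f;L) 1_L(x) + \sum_{L' \in \children(L)} \abs{f(x) - m_f(L')} 1_{L'}(x),
\]
valid at a.e.\ $x$. For $x \in E(L)$, the standard Lebesgue-density argument shows that $\Omega_L \setminus \bigcup \children(L)$ is null, so a.e.\ such $x$ lies outside $\Omega_L$ and the first term alone suffices. For $x \in L' \in \children(L)$, the triangle inequality reduces matters to the \emph{median-jump estimate} $\abs{m_f(L') - m_f(L)} \leq 2\omega_\lambda(f;L)$, which is the main obstacle. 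The maximality in the selection forces the \emph{strict} bound $\abs{L' \cap \Omega_L} < \tfrac12\abs{L'}$, since $L' \subseteq \widehat{L'}$ and the parent average is $< 2^{-d-1}$. Applying the first estimate of Lemma~\ref{lem:median} to $f - m_f(L)$ on $L'$---whose median there is $m_f(L') - m_f(L)$---with any $\nu \in (\abs{L' \cap \Omega_L}/\abs{L'},\, \tfrac12)$ then yields
\[
  \abs{m_f(L') - m_f(L)} \leq (1_{L'}(f - m_f(L)))^*(\nu\abs{L'}) \leq 2\omega_\lambda(f;L),
\]
directly from the definition of the decreasing rearrangement. The existence of such a $\nu$ is precisely the reason for using ``$\geq$'' rather than ``$>$'' in the stopping condition.

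Iterating the one-step bound through $N$ generations yields
\[
  \abs{f(x) - m_f(Q^0)} \leq 2\sum_{k=0}^{N-1}\sum_{L \in \mathscr{L}_k} \omega_\lambda(f;L) 1_L(x) + R_N(x),
\]
with remainder $R_N(x) := \sum_{L \in \mathscr{L}_N}\abs{f(x) - m_f(L)} 1_L(x)$ supported in $\bigcup \mathscr{L}_N$. Induction on $\sum_{L' \in \children(L)}\abs{L'} \leq \tfrac12\abs{L}$ gives $\bigabs{\bigcup \mathscr{L}_N} \leq 2^{-N}\abs{Q^0}$, so the set of $x$ lying in infinitely many levels is null; for a.e.\ $x \in Q^0$ one has $R_N(x) = 0$ for all sufficiently large $N$, and passing to the limit yields the asserted inequality at a.e.\ point.
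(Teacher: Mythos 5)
Your proof is correct, but it takes a genuinely different route from the paper's. The paper stops on a \emph{median--jump} criterion: the cubes $Q^1_j$ are the maximal dyadic subcubes of $Q^0$ for which some child $Q'$ has $\abs{m_f(Q')-m_f(Q^0)}$ exceeding $(1_{Q^0}(f-m_f(Q^0)))^*(\lambda\abs{Q^0})$. There the median--jump control is built into the stopping rule, the sparseness $\sum_j\abs{Q^1_j}\le\tfrac12\abs{Q^0}$ has to be \emph{derived} (via the first bound of Lemma~\ref{lem:median} and a counting argument), and the bound on $E(Q^0)$ uses Fujii's median convergence lemma. Your construction is dual: you stop on a \emph{density} criterion for the bad set $\Omega_L=\{\abs{f-m_f(L)}>2\omega_\lambda(f;L)\}$, so that sparseness is immediate from the threshold calibration $2^{-d-1}$ vs. $\lambda=2^{-d-2}$, while the median--jump estimate $\abs{m_f(L')-m_f(L)}\le 2\omega_\lambda(f;L)$ must be \emph{derived}---and you derive it correctly, using the maximality to get the strict density bound $\abs{L'\cap\Omega_L}<\tfrac12\abs{L'}$ from the parent, choosing $\nu\in(\abs{L'\cap\Omega_L}/\abs{L'},\tfrac12)$, and then applying the first bound of Lemma~\ref{lem:median} together with the very definition of the decreasing rearrangement. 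Your observation that the stopping inequality must be ``$\ge$'' rather than ``$>$'' is a genuine and correct subtlety. A pleasant bonus of your route is that the differentiation step needs only the ordinary (dyadic) Lebesgue density theorem, rather than Fujii's ``Lebesgue differentiation theorem for the median'' invoked in the paper, so the argument is somewhat more self-contained. Both routes rest on the same Lemma~\ref{lem:median} and the same calibration of constants, and produce the same $\lambda=2^{-d-2}$, $\gamma=\tfrac12$, and factor $2$ in the final inequality.
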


I now discuss the application of this formula in the proof of the $A_2$ theorem.

\subsection{The reduction ``$A_2$ for $S_k^+$ $\to$ $A_2$ for CZO''}
With minor modifications, everything here extends to the maximal truncated singular integral $T_{\#}$, and even, for a smaller class of Calder\'on--Zygmund operators, a stronger nonlinearity given by the so-called $q$-variation of singular integrals $V_q^\phi T$; see Hyt\"onen--Lacey--P\'erez \cite{HLP}. But for the sake of simplicity I only present this discussion for the linear operator~$T$.

We consider a (say, bounded) compactly supported $f$ and pick some $Q^0\supset\supp f$. Lerner's formula (applied to $Tf$) guarantees that
\begin{equation*}
  \abs{Tf(x)}\leq \abs{m_{Tf}(Q_0)}+2\sum_{L\in\mathscr{L}}\omega_\lambda(Tf;L)1_L(x).
\end{equation*}
Using the first and third estimates from Lemma~\ref{lem:median}, we see that
\begin{equation*}
  \abs{m_{Tf}(Q_0)}\leq (1_{Q_0} Tf)^*(\nu\abs{Q_0})
  \leq\frac{1}{\nu\abs{Q_0}}\Norm{Tf}{L^{1,\infty}}
  \lesssim\frac{1}{\abs{Q_0}}\Norm{f}{L^1}=\fint_{Q_0}\abs{f}
\end{equation*}
by the boundedness $T:L^1\to L^{1,\infty}$, and the fact that $\supp f\subseteq Q_0$ in the last two steps.
We also have the following estimate that essentially goes back to Jawerth and Torchinsky \cite{JawTor}:

\begin{lemma}
Let the kernel $K$ of $T$ satisfy
\begin{equation*}
  \abs{K(x,y)-K(x',y)}\lesssim\Omega\Big(\frac{\abs{x-x'}}{\abs{x-y}}\Big)\frac{1}{\abs{x-y}}\qquad
  \forall\ \abs{x-y}>2\abs{x-x'}>0,
\end{equation*}
where the modulus of continuity $\Omega:[0,\infty)\to[0,\infty)$ is increasing and subadditive with $\Omega(0)=0$. Then\quad
$\displaystyle
  \omega_\lambda(Tf;Q)\lesssim\sum_{k=0}^\infty\Omega(2^{-k})\fint_{2^k Q}\abs{f}.
$
\end{lemma}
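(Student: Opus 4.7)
The plan is to apply the classical Calder\'on--Zygmund splitting $f = f_1 + f_2$ with $f_1 := f 1_{2Q}$ (near part) and $f_2 := f 1_{(2Q)^c}$ (far part), and to choose the competing constant in the definition of $\omega_\lambda(Tf;Q)$ as $c := T f_2(x_Q)$, where $x_Q$ denotes the center of $Q$. This choice is well-posed since $x_Q \notin \supp f_2$ and $T f_2(x_Q)$ is given by an absolutely convergent kernel integral. Using the subadditivity of the decreasing rearrangement, $(g+h)^*(s+t)\le g^*(s) + h^*(t)$ with $s=t=\tfrac12\lambda|Q|$, one reduces $\omega_\lambda(Tf;Q)$ to a sum of a near-part and a far-part contribution, to be handled by entirely separate mechanisms.

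For the near part, I would combine the assumed weak-$(1,1)$ bound for $T$ with the second estimate of Lemma~\ref{lem:median} to get
\begin{equation*}
(1_Q T f_1)^*(\tfrac12\lambda|Q|) \le \frac{2}{\lambda|Q|}\Norm{T f_1}{L^{1,\infty}} \lesssim \frac{1}{|Q|}\int_{2Q}|f| \lesssim \fint_{2Q}|f|,
\end{equation*}
which slots into the $k=0$ (or $k=1$) term of the target sum. For the far part, I would derive a \emph{pointwise} bound on $Q$, which trivially dominates any decreasing rearrangement. Decomposing $(2Q)^c = \bigsqcup_{k\ge 1}(2^{k+1}Q\setminus 2^k Q)$, and noting that $|x-x_Q|\lesssim\ell(Q)$ while $|x-y|\gtrsim 2^k\ell(Q)$ for $x\in Q$ and $y$ in the $k$th annulus, the hypothesized kernel smoothness, the monotonicity of $\Omega$ and the standard $1/|x-y|^d$ volume normalization yield $|K(x,y)-K(x_Q,y)|\lesssim \Omega(2^{-k})/|2^k Q|$. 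Integrating against $|f|$ annulus by annulus then gives
\begin{equation*}
|T f_2(x) - c| = \Big|\int (K(x,y)-K(x_Q,y))f_2(y)\ud y\Big| \lesssim \sum_{k\ge 1}\Omega(2^{-k})\fint_{2^{k+1}Q}|f|.
\end{equation*}

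Combining the two estimates (and harmlessly re-indexing the annular sum) yields the claimed inequality, with the $k=0$ slot of the final sum absorbing the near-part contribution. The key idea is the traditional one: subtracting $T f_2(x_Q)$ activates the kernel-difference mechanism and thus the modulus $\Omega$, while for the near part the weak-$(1,1)$ bound of $T$ is precisely strong enough to survive the passage through the decreasing rearrangement at the threshold $\lambda|Q|$, thanks to the second bound of Lemma~\ref{lem:median}. I do not anticipate a serious obstacle; the only technical care is in matching the smoothness factor $\Omega(2^{-k})$ with the annular volume $|2^k Q|$ so as to obtain the desired normalized averages $\fint_{2^k Q}|f|$ rather than absolute integrals.
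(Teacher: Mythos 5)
The paper itself states this lemma without proof, citing Jawerth and Torchinsky, so there is no internal argument to compare against; your reconstruction---Calder\'on--Zygmund split $f=f_1+f_2$, the choice $c=Tf_2(x_Q)$, subadditivity of the decreasing rearrangement, weak $(1,1)$ boundedness for the near part via the second estimate of Lemma~\ref{lem:median}, and an annular pointwise kernel estimate for the far part---is the standard route and is essentially correct. One point to tighten: the kernel-smoothness hypothesis is only available when $|x-y|>2|x-x'|$, and with $x\in Q$, $x'=x_Q$, $y\in(2Q)^c$ you only have $|x-y|\geq\ell(Q)/2$ while $|x-x_Q|$ may be as large as $\sqrt{d}\,\ell(Q)/2$, so the condition can fail for $y$ near $\partial(2Q)$ (already in dimension one). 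The usual fix is to take $f_1=f\,1_{CQ}$ for a dimensional constant $C\gtrsim\sqrt{d}$. With this enlargement the near-part estimate $\fint_{CQ}|f|$ sits in the $k\approx\log_2 C$ slot of the target sum rather than $k=0$, at the harmless cost of a fixed factor $\Omega(2^{-\lceil\log_2 C\rceil})^{-1}$; that this is finite follows from monotonicity and subadditivity of $\Omega$ once $\Omega\not\equiv 0$ (if $\Omega(t_0)=0$ for some $t_0>0$ then $\Omega\equiv 0$). Finally, the re-indexing of the far-part sum uses $\Omega(2t)\leq 2\Omega(t)$, again subadditivity. None of these affects the validity of your argument; they are just bookkeeping you should make explicit.
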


To replace the concentric expansion $2^k Q$ by the dyadic ancestor $Q^{(k)}$, we use the following geometric lemma, well known for $k=0$, and proven in \cite{HLP} as stated here.

\begin{lemma}
For any cube $Q\subset\R^d$, there exists a shifted dyadic cube
\begin{equation*}
  R\in \mathscr{D}^\alpha:=\{2^{-k}([0,1)^d+m+(-1)^k\alpha): k\in\Z,m\in\Z^d\},
\end{equation*}
for some $\alpha\in\{0,\tfrac13,\tfrac23\}^d$, such that
$\quad
  Q\subseteq R,\quad 2^k Q\subseteq R^{(k)},\quad \ell(R)\leq 6\ell(Q).
$
\end{lemma}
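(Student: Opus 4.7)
My approach reduces the problem to $d=1$ by choosing $\alpha=(\alpha_1,\ldots,\alpha_d)$ coordinate-wise and letting $R=R_1\times\cdots\times R_d$ with each $R_i\in\mathscr{D}^{\alpha_i}(\R)$. The operations $R\mapsto R^{(k)}$ and $Q\mapsto 2^k Q$ both factor over coordinates, as does the side-length bound; so it suffices to treat intervals.

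In one dimension, given an interval $Q$ of length $\ell$ and center $c$, I would first select the scale by picking the unique $n\in\Z$ with $2^n\in[3\ell,6\ell)$, which exists since any half-open interval whose endpoints have ratio $2$ contains exactly one power of $2$. The three shifted partitions $\mathscr{D}^\alpha$ at level $-n$, $\alpha\in\{0,1/3,2/3\}$, have boundary lattices that differ by mutual translations of $2^n/3$ and $2\cdot 2^n/3$. Since $\ell<2^n/3$, a one-third trick argument gives at least one $\alpha$ for which $Q$ lies inside a single level-$(-n)$ cube $R\in\mathscr{D}^\alpha$; a small strengthening of the counting guarantees in addition that $c$ sits at distance at least a fixed positive fraction of $\ell$ from each endpoint of $R$.

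The remaining inclusion $2^k Q\subseteq R^{(k)}$ reduces, in terms of the left and right extensions $l_k,r_k\geq 0$ of $R^{(k)}$ past $R$ (with $l_k+r_k=(2^k-1)\ell(R)$), to the pair of inequalities asserting that these extensions, together with the initial margins of $c$ within $R$, exceed $2^{k-1}\ell$ on each side. For small $k$ this is absorbed by the margins secured in the preceding step; for large $k$ one needs $l_k,r_k\gtrsim 2^k\ell(R)$, a two-sided growth which depends on how $R$ sits inside each ancestor. The role of the $(-1)^k$ alternation in the definition of $\mathscr{D}^\alpha$ is exactly to force this two-sidedness for $\alpha\in\{1/3,2/3\}$: unlike in $\mathscr{D}^0$, where the cube $R=[0,1)$ stays pinned to the left corner of all its ancestors (so that $l_k\equiv 0$), the sign-flipping offset in $\mathscr{D}^\alpha$ forces the parent of any cube to alternate sides in a controlled way, and an induction on $k$ using the explicit parent formula yields $\min(l_k,r_k)\gtrsim 2^k\ell(R)$ for $k$ above a fixed small threshold.

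The main obstacle is this last combinatorial verification: unrolling the nested parent relation in $\mathscr{D}^\alpha$ to check that the alternation genuinely delivers uniform two-sided geometric growth of $l_k,r_k$, and verifying that among the three available shifts $\alpha$ at least one may be chosen so that $Q$ simultaneously has adequate margin in $R$ on the side matched to the favorable ancestor structure.
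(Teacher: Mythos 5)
The paper does not prove this lemma; it is imported verbatim from the reference \cite{HLP} and used here as a black box, so there is no in-text proof to compare against. Evaluated on its own terms, your argument has a real gap, located exactly at the step you flag as the ``main obstacle''. Note first a reading point that matters: as the paper says immediately afterwards (``Let us denote the index $\alpha$ and cube $R$ produced by this lemma by $\alpha(Q,k)$ and $R(Q,k)$''), the integer $k$ is \emph{fixed} and $R$ may depend on $k$; your argument looks for one $R$ serving all $k$ at once, which is in fact impossible at the stated scale $\ell(R)\le 6\ell(Q)$. More seriously, the central assertion --- that for $\alpha\in\{1/3,2/3\}$ every cube satisfies $\min(l_k,r_k)\gtrsim 2^k\ell(R)$ once $k$ exceeds a \emph{fixed} threshold --- is false: the threshold is not uniform in $R$. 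Take $d=1$, $\alpha=1/3$, $R=[13/3,16/3)\in\mathscr{D}^{1/3}$. Its ancestors are $[10/3,16/3)$, $[4/3,16/3)$, $[-8/3,16/3)$, $[-32/3,16/3)$, $[-32/3,64/3),\dots$, so $r_0=\cdots=r_4=0$; more generally, if $\sup R=2^N/3$ and $\ell(R)=1$ then $r_k=0$ for $0\le k\le N$, so the ``stagnation length'' is unbounded. The alternation in $(-1)^k$ does \emph{not} force a cube to drift away from a corner of its ancestors at a uniform rate. For a concrete failure of the whole scheme: with $Q=[27/6,29/6)$ (so $c=14/3$, $\ell=1/3$, $2^n=1$) and $k=3$, the margin-maximizing shift at scale $1$ is $\alpha=1/3$ with $R=[13/3,16/3)$, yet $8Q=[10/3,6)\not\subset R^{(3)}=[-8/3,16/3)$. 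The lemma does hold for this $(Q,k)$, but via $\alpha=0$, $R=[4,5)$, $R^{(3)}=[0,8)$ --- precisely the grid your heuristic discards.

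The repair is to enforce margins at the two relevant scales \emph{simultaneously} and pigeonhole over the three shifts, rather than to center $Q$ at one scale and then hope for two-sided ancestor growth. In $d=1$, with $2^n\in[3\ell,6\ell)$ fixed and $k$ given, consider for each $\alpha$ the events $A_\alpha$: $\operatorname{dist}(c,\ \text{scale-}2^n\text{ boundaries of }\mathscr{D}^\alpha)\ge\ell/2$, and $B_\alpha$: $\operatorname{dist}(c,\ \text{scale-}2^{n+k}\text{ boundaries of }\mathscr{D}^\alpha)\ge 2^{k-1}\ell$. The first gives $Q\subseteq R$ and the second gives $2^kQ\subseteq R^{(k)}$, where $R$ is the level-$(-n)$ cube of $\mathscr{D}^\alpha$ containing $c$. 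At scale $2^n$ the three boundary lattices are mutual translates by $2^n/3$, while $\ell/2\le 2^n/6$, so at most one $\alpha$ fails $A_\alpha$; similarly at scale $2^{n+k}$ (where $2^{k-1}\ell\le 2^{n+k}/6$) at most one $\alpha$ fails $B_\alpha$. Since there are three shifts, at least one satisfies both. The sign $(-1)^k$ in the definition of $\mathscr{D}^\alpha$ is used here only to ensure that at \emph{every} level the three shifted boundary sets remain mutual translates by one third of the side length; no claim about two-sided growth of $l_k,r_k$ is needed, and indeed none is available.
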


Let us denote the index $\alpha$ and cube $R$ produced by this lemma by $\alpha(Q,k)$ and $R(Q,k)$.
%
Thus we have
\begin{equation}\label{eq:1stRed}
\begin{split}
  &\abs{Tf(x)} \cdot 1_{Q_0}(x)
  \lesssim \fint_{Q_0}\abs{f}\cdot 1_{Q_0}(x)+\sum_{L\in\mathscr{L}}\omega_\lambda(Tf;L)1_L(x) \\
    &\lesssim \fint_{Q_0}\abs{f}\cdot 1_{Q_0}(x)+\sum_{L\in\mathscr{L}}\sum_{k=0}^\infty\Omega(2^{-k})\fint_{2^k L}\abs{f}\cdot 1_L(x) \\
  &\lesssim \sum_\alpha\sum_{k=0}^\infty \Omega(2^{-k})\sum_{R\in\mathscr{R}^\alpha_k}\fint_{R^{(k)}}\abs{f}\cdot 1_R(x) 
  =:\sum_\alpha\sum_{k=0}^\infty \Omega(2^{-k}) S_{\alpha,k}^+(\abs{f})(x)
\end{split}
\end{equation}
where
\begin{equation*}
  \mathscr{R}^\alpha_k:=\{R(L,k):L\in\mathscr{D},\alpha(L,k)=\alpha\}\subset\mathscr{D}^\alpha
\end{equation*}
and $1_{Q_0}(x)\fint_{Q_0}\abs{f}$ was absorbed into the sum $\sum_{R\in\mathscr{R}^\alpha_k}$ with $\alpha=k=0$.

The collections $\mathscr{R}^\alpha_k$ are sparse; indeed, the sets $E(L)$, $L\in\mathscr{L}$, are pairwise disjoint, and
\begin{equation*}
  \abs{E(L)}\geq 2^{-1}\abs{L}\geq 2^{-1}6^{-d}\abs{R(L,k)}.
\end{equation*}
%

Since $Q_0\supset\supp f$ was arbitrary, from \eqref{eq:1stRed} we see that to estimate $Tf$ in a Banach function space, it clearly suffices to estimate the $S^+_{\alpha,k}(\abs{f})$ in the same space, and this proves the claimed reduction.

\subsection{The reduction ``$A_2$ for $S^+_0$ $\to$ $A_2$ for $S^+_k$''}
This reduction is due to Lerner. Since we deal with positive operators, we can also restrict to nonnegative functions.
 Dualizing $S^+_k f$ with a function $g$ of bounded support (choosing $Q_0\supset\supp f,\supp g$), we have
\begin{equation*}
   \pair{S^+_{\alpha,k}f}{g} =\pair{f}{(S^+_{\alpha,k})^t g},
\end{equation*}
and we apply Lerner's formula to $(S^+_{\alpha,k})^t g$ on a large enough cube $Q_\alpha\in\mathscr{D}^\alpha$. This gives
\begin{equation*}
  (S^+_{\alpha,k})^t g(x)
  \leq m_{(S^+_{\alpha,k})^t g}(Q_\alpha)
  +\sum_{L\in\mathscr{L}_{\alpha,k}}\omega_\lambda((S^+_{k,\alpha})^t g,L)1_L(x),
\end{equation*}
and one can check that
\begin{equation*}
  \omega_\lambda((S^+_{k,\alpha})^t g;L)
  \lesssim(1+k)\fint_L g,
\end{equation*}
where the essential ingredient is the weak $(1,1)$ bound for dyadic shifts, the only remaining component of the original proof of the $A_2$ conjecture \cite[Prop.~5.1]{Hytonen:A2}. This holds for a general dyadic shift of complexity $k$, and it is not in any substantial way easier for the particular shifts $(S_{k,\alpha}^+)^*$, a special case treated in \cite[Lemma 3.2]{Lerner:simpleA2}.

\begin{proposition}\label{prop:weak11}
Any dyadic shift $S_k$ of order $k$ satisfies
\begin{equation*}
  \Norm{S_k f}{L^{1,\infty}}\lesssim(1+k)\Norm{f}{L^1}.
\end{equation*}
\end{proposition}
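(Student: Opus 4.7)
The plan is to carry out a dyadic Calder\'on--Zygmund decomposition of $f$ at height $\alpha$, handle the good part by the uniform $L^2$-bound $\Norm{S_k}{L^2\to L^2}\leq 1$, and control the bad part outside the exceptional set by a direct $L^1$ estimate whose loss grows only linearly in $k$.

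Specifically, I would write $f=g+\sum_j b_j$ with pairwise disjoint maximal dyadic cubes $Q_j$ satisfying $\ave{\abs{f}}_{Q_j}>\alpha$, $b_j=(f-\ave{f}_{Q_j})1_{Q_j}$ mean-zero and supported in $Q_j$ with $\Norm{b_j}{L^1}\lesssim\alpha\abs{Q_j}$, and $\Norm{g}{L^\infty}\lesssim\alpha$, $\Norm{g}{L^2}^2\lesssim\alpha\Norm{f}{L^1}$. Setting $\Omega:=\bigcup_j Q_j$, we have $\abs{\Omega}\lesssim\alpha^{-1}\Norm{f}{L^1}$, and Chebyshev plus $L^2$-boundedness yields $\abs{\{\abs{S_k g}>\alpha\}}\lesssim\alpha^{-1}\Norm{f}{L^1}$.

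The crucial step is a support/cancellation claim for the bad part. Writing $S_k=\sum_K A_K$ with
\begin{equation*}
  A_K f=\sum_{\substack{I,J\subseteq K\\\ell(I)=\ell(J)=2^{-k}\ell(K)}}a_{IJ}^K\pair{f}{h_I}h_J,
\end{equation*}
I claim that, for each fixed $j$, the piece $A_K(b_j)$ is supported in $Q_j$ unless $K=Q_j^{(m)}$ for some $1\leq m\leq k$. Indeed, $\pair{b_j}{h_I}=0$ whenever $I\cap Q_j=\emptyset$ (disjoint supports) or $I\supsetneq Q_j$ (then the dyadic Haar $h_I$ is constant on $Q_j$, on which $b_j$ has mean zero), so only $I\subseteq Q_j$ contributes, and $I\subseteq K$ together with $\ell(K)=2^k\ell(I)$ forces $K$ to be either a subcube of $Q_j$ or one of its ancestors $Q_j^{(1)},\dots,Q_j^{(k)}$. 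In the nontrivial case $K=Q_j^{(m)}$, the uniqueness of the cube $J\owns x$ at the fixed scale $2^{m-k}\ell(Q_j)$, combined with the standard size bounds $\Norm{h_I}{L^\infty},\Norm{h_J}{L^\infty}\leq\abs{I}^{-1/2}$, $\abs{a_{IJ}^K}\leq\sqrt{\abs{I}\abs{J}}/\abs{K}$ and the disjoint-$I$ summation $\sum_{I\subseteq Q_j,\ell(I)=2^{m-k}\ell(Q_j)}\abs{\pair{b_j}{h_I}}\leq\abs{I}^{-1/2}\Norm{b_j}{L^1}$, yields by a short direct calculation the pointwise bound
\begin{equation*}
  \abs{A_{Q_j^{(m)}}(b_j)(x)}\lesssim\frac{\Norm{b_j}{L^1}}{\abs{Q_j^{(m)}}}1_{Q_j^{(m)}}(x).
\end{equation*}

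Integrating over $x\notin Q_j$ gives $\int_{Q_j^c}\abs{S_k b_j}\lesssim\sum_{m=1}^k\Norm{b_j}{L^1}=k\Norm{b_j}{L^1}$, and summation in $j$ produces $\int_{\Omega^c}\abs{S_k b}\lesssim k\Norm{f}{L^1}$. Chebyshev then bounds $\abs{\{x\notin\Omega:\abs{S_k b(x)}>\alpha\}}\lesssim k\alpha^{-1}\Norm{f}{L^1}$, and adding $\abs{\Omega}$ yields the desired $(1+k)$-linear bound. The principal obstacle is exactly the support/cancellation claim: both layers of mean-zero cancellation---from $b_j$ and from the Haar-type $h_I$---must be used in tandem to confine the relevant $K$ to the $k$ proper ancestors of $Q_j$; without this reduction one is forced to enlarge $\Omega$ to $\bigcup_j Q_j^{(k)}$, incurring an exponential loss $2^{dk}$ instead of the linear $1+k$.
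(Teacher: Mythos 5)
Your argument is correct and is essentially the one the paper has in mind: Proposition~\ref{prop:weak11} is stated here without proof, with a pointer to the original argument in the $A_2$ paper (and its specialization in Lerner's), and that original proof is exactly the Calder\'on--Zygmund decomposition you carry out. The key point---that $\langle b_j,h_I\rangle$ vanishes unless $I\subseteq Q_j$, so that $A_K b_j$ can escape $Q_j$ only when $K$ is one of the $k$ strict dyadic ancestors $Q_j^{(1)},\dots,Q_j^{(k)}$, and then a fixed-scale, disjoint-$I$ summation plus the coefficient normalization gives $\int|A_{Q_j^{(m)}}b_j|\lesssim\|b_j\|_{L^1}$---is precisely how the linear-in-$k$ loss arises there, and your final assembly via $L^2$ for the good part and Chebyshev plus $\int_{\Omega^c}|S_kb|\lesssim k\|f\|_{L^1}$ for the bad part is the standard conclusion. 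Two very minor remarks: a general dyadic shift of complexity $k$ allows $\ell(I)=2^{-i}\ell(K)$, $\ell(J)=2^{-j}\ell(K)$ with $\max(i,j)\le k$ rather than $i=j=k$, but your argument goes through verbatim with $Q_j^{(1)},\dots,Q_j^{(i)}$ in place of $Q_j^{(1)},\dots,Q_j^{(k)}$; and the typo $\|h_J\|_\infty\le|I|^{-1/2}$ should of course read $|J|^{-1/2}$ (harmless here since $|I|=|J|$).
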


The median term is handled similarly, and the result is that
\begin{equation*}
  (S^+_{\alpha,k})^t g(x)
  \lesssim (1+k)\sum_{L\in\mathscr{L}_{\alpha,k}} \fint_Lg\cdot 1_L(x)
  =:(1+k)(S^+_{0,(\alpha,k)})g(x),
\end{equation*}
where $S^+_{0,(\alpha,k)}$ is a positive dyadic shift of complexity zero, a self-dual operator. Hence
\begin{equation*}
  \pair{S^+_{\alpha,k} f}{g}
  =\pair{f}{(S^+_{\alpha,k})^t g}
  \leq(1+k)\pair{S^+_{0,(\alpha,k)}f}{g}.
\end{equation*}

If we combine this with the previous reduction, we arrive at
\begin{equation*}
  \abs{\pair{Tf}{g}}
  \lesssim\sum_\alpha\sum_{k=0}^\infty\Omega(2^{-k})(1+k)\pair{S^+_{0,(\alpha,k)}(\abs{f})}{\abs{g}},
\end{equation*}
and thus, for any Banach function space $X$,
\begin{equation*}
  \Norm{Tf}{X}
  \lesssim\Big(\sum_{k=0}^\infty\Omega(2^{-k})(1+k)\Big)\sup_{S^+_0}\Norm{S^+_0\abs{f}}{X},
\end{equation*}
where the supremum is over all positive dyadic shifts of complexity zero. As long as the above series converges (which is clear for the H\"older moduli $\Omega(t)=t^\delta$, but also for some weaker moduli of continuity), if suffices to estimate the norm of $S^+_0\abs{f}$.

Note that $\sum_{k=0}^\infty\Omega(2^{-k})<\infty$ is the classical Dini condition $\int_0^1\Omega(t)\ud t/t<\infty$, where as we require a logarithmic strengthening $\int_0^1\Omega(t)(1+\log(1/t))\ud t/t<\infty$ caused by Proposition~\ref{prop:weak11}. While it is open if this logarithm is actually necessary in the final result, any attempt to remove it would have to circumvent the application of Proposition~\ref{prop:weak11}. Namely, this proposition is actually sharp, already for the special shifts $S^+_{\alpha,k}$, as shown by example in Section~\ref{sec:weak11}.

%
%

\subsection{The reduction ``testing $S^+_0$ $\to$ $A_2$ for $S^+_0$''}
This is a direct application of the following elegant two-weight result of Lacey, Sawyer and Uriarte-Tuero \cite{LSU:positive}. A non-dyadic variant of this result goes back to Sawyer \cite{Sawyer:2wFractional}, and the stated dyadic version in the case $p=q=2$ to Nazarov--Treil--Volberg \cite{NTV:bilinear}. There is also a more recent simplification of the proof of the full theorem by Treil \cite{Treil:posDyad}. 

\begin{theorem}\label{thm:LSU}
Let $\lambda_Q\geq 0$ be some coefficients, and
\begin{equation}\label{eq:posDyad}
  S f:=\sum_{Q\in\mathscr{D}}\lambda_Q\ave{f}_Q 1_Q,\qquad
  S_Q f:=\sum_{\substack{Q'\in\mathscr{D}\\ Q'\subseteq Q}}\lambda_{Q'}\ave{f}_{Q'} 1_{Q'}
\end{equation}
be the associated positive dyadic shift of complexity zero, and its subshifts. For any two weights $w$ and $\sigma$, we have
\begin{equation*}
  \sup_f\frac{\Norm{S(f\sigma)}{L^p(w)}}{\Norm{f}{L^p(\sigma)}}
  \eqsim\sup_{Q\in\mathscr{D}}\frac{\Norm{S_Q(\sigma)}{L^p(w)}}{\sigma(Q)^{1/p}}
     +\sup_{Q\in\mathscr{D}}\frac{\Norm{S_Q(w)}{L^{p'}(\sigma)}}{w(Q)^{1/p'}}.
\end{equation*}
\end{theorem}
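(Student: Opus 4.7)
\emph{Necessity.} For the lower bound on the operator norm, I would test $f\mapsto S(f\sigma)\colon L^p(\sigma)\to L^p(w)$ on $f=1_Q$: since $S_Q(\sigma)$ is pointwise dominated by $S(1_Q\sigma)$ (the terms agree on cubes $Q'\subseteq Q$, and the remaining terms in $S(1_Q\sigma)$ are nonnegative), this yields $\Norm{S_Q(\sigma)}{L^p(w)}\leq\Norm{S(1_Q\sigma)}{L^p(w)}$, while $\Norm{1_Q}{L^p(\sigma)}=\sigma(Q)^{1/p}$, which recovers the first testing constant. For the second, I would exploit the self-duality identity $\int S(f\sigma)\cdot gw=\int f\sigma\cdot S(gw)$, immediate from the form of $S$, which shows that the dual operator $g\mapsto S(gw)\colon L^{p'}(w)\to L^{p'}(\sigma)$ has the same norm, and then test it on $g=1_Q$.

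\emph{Sufficiency: setup.} By the same self-duality it is enough to prove
\begin{equation*}
\mathcal{B}(f,g):=\sum_{Q\in\mathscr{D}}\lambda_Q\ave{f\sigma}_Q\ave{gw}_Q\abs{Q}\lesssim(\mathcal{T}_p+\mathcal{T}_{p'})\Norm{f}{L^p(\sigma)}\Norm{g}{L^{p'}(w)}
\end{equation*}
for nonnegative $f,g$, where $\mathcal{T}_p,\mathcal{T}_{p'}$ denote the two testing constants on the right-hand side of the theorem. The plan is to follow the parallel-corona argument of Nazarov--Treil--Volberg (for $p=2$) and Lacey--Sawyer--Uriarte-Tuero (in general). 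I would build stopping collections $\mathscr{F}$ for $(f,\sigma)$ and $\mathscr{G}$ for $(g,w)$: the children of $F\in\mathscr{F}$ are the maximal $F'\subseteq F$ with $\ave{f}_{F'}^\sigma>2\ave{f}_F^\sigma$, and similarly for $\mathscr{G}$. Two standard properties follow: every $Q$ with minimal $\mathscr{F}$-ancestor $\pi_{\mathscr{F}}(Q)=F$ satisfies $\ave{f}_Q^\sigma\leq 2\ave{f}_F^\sigma$; and the major-subset bound $\sigma\bigl(F\setminus\bigcup_{F'\in\children_{\mathscr{F}}(F)}F'\bigr)\geq\tfrac12\sigma(F)$ makes $\mathscr{F}$ a $\sigma$-Carleson family, so that $\sum_{F\in\mathscr{F}}(\ave{f}_F^\sigma)^p\sigma(F)\lesssim\Norm{f}{L^p(\sigma)}^p$, and similarly for $\mathscr{G}$.

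\emph{Sufficiency: main estimate.} Since $\pi_{\mathscr{F}}(Q)$ and $\pi_{\mathscr{G}}(Q)$ both contain $Q$, they are nested, so I would split $\mathcal{B}=\mathcal{B}_{\mathrm{I}}+\mathcal{B}_{\mathrm{II}}$ according to which containment holds. The two pieces play symmetric roles under $(f,\sigma,\mathscr{F})\leftrightarrow(g,w,\mathscr{G})$, so it suffices to bound $\mathcal{B}_{\mathrm{I}}$, the sum over $Q$ with $\pi_{\mathscr{F}}(Q)\subseteq\pi_{\mathscr{G}}(Q)$. On $\mathcal{B}_{\mathrm{I}}$ the $\mathscr{G}$-stopping gives $\ave{g}_Q^w\leq 2\ave{g}_G^w$ with $G=\pi_{\mathscr{G}}(Q)$, and regrouping by $G$ produces
\begin{equation*}
\mathcal{B}_{\mathrm{I}}\leq 2\sum_{G\in\mathscr{G}}\ave{g}_G^w\int f\sigma\cdot A_G,\qquad A_G:=\sum_{Q:\,\pi_{\mathscr{G}}(Q)=G}\lambda_Q\ave{w}_Q1_Q\leq S_G(w).
\end{equation*}
H\"older together with the $\mathcal{T}_{p'}$ testing hypothesis gives $\int f\sigma\cdot A_G\leq\mathcal{T}_{p'}\Norm{f1_G}{L^p(\sigma)}w(G)^{1/p'}$, and a final H\"older in $G$ combined with the $w$-Carleson embedding on $\mathscr{G}$ is meant to close the estimate by $\mathcal{T}_{p'}\Norm{f}{L^p(\sigma)}\Norm{g}{L^{p'}(w)}$.

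\emph{Main obstacle.} The last step is the delicate one: the $\mathscr{G}$-cubes are nested, so the naive sum $\sum_G\Norm{f1_G}{L^p(\sigma)}^p$ overcounts $f$, and one is forced to further decompose $f1_G$ according to its $\mathscr{F}$-stopping inside $G$. After this second corona decomposition the bilinear form separates as a double sum over pairs $(F,G)$ with $F\subseteq G$ of products $\ave{f}_F^\sigma\ave{g}_G^w$ times a geometric factor produced by the testing bound, and a bilinear Carleson embedding closes it. This coupled bookkeeping of the two stopping families is the technical core of the Lacey--Sawyer--Uriarte-Tuero proof, cleanly streamlined in Treil's variant.
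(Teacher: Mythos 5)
Your roadmap matches the paper's: both follow the parallel-corona strategy (two stopping families $\mathscr{F},\mathscr{G}$ built from $f$ and $g$, splitting the bilinear form according to which of $\pi_{\mathscr{F}}(Q),\pi_{\mathscr{G}}(Q)$ contains the other, with the Carleson property of $\mathscr{F}$ via $\sigma(E_{\mathscr{F}}(F))\geq\tfrac12\sigma(F)$). The necessity direction is fine. The gap is precisely in the step you flag as the ``main obstacle,'' and the fix you sketch there is not the right one.

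The problem with the order of operations: you first bound $A_G\leq S_G(w)$, then apply H\"older and testing, arriving at $2\mathcal{T}_{p'}\sum_G\ave{g}^w_G w(G)^{1/p'}\Norm{f1_G}{L^p(\sigma)}$. At this point $f$ only enters through the norm $\Norm{f1_G}{L^p(\sigma)}$, and since the $G$'s are nested, you cannot undo the damage by ``further decomposing $f1_G$'' --- the information needed for that was discarded when you enlarged $A_G$ to $S_G(w)$ and applied H\"older. Moreover, the decomposition you name (``$\mathscr{F}$-stopping inside $G$'') is the wrong one: on this half of the split one decomposes $f$ by the \emph{$\mathscr{G}$}-structure, not the $\mathscr{F}$-structure.

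The paper's mechanism (Section~\ref{sec:T1}, symmetrized to your case $F\subseteq G$) is to first observe the \emph{exact} identity: for $Q$ with $\pi_{\mathscr{G}}(Q)=G$, every $\mathscr{G}$-child $G'$ of $G$ meeting $Q$ is strictly contained in $Q$, so
\begin{equation*}
  \int_Q f\sigma=\int_Q f_G\sigma,\qquad f_G:=f\,1_{E_{\mathscr{G}}(G)}+\sum_{G'\in\textup{ch}^*_{\mathscr{G}}(G)}\ave{f}^\sigma_{G'}1_{G'}.
\end{equation*}
Only \emph{after} this replacement does one enlarge the sum to all $Q\subseteq G$, identify $\pair{T_G(w)}{f_G\sigma}$, and apply H\"older plus the testing bound to get $\mathcal{T}_{p'}w(G)^{1/p'}\Norm{f_G}{L^p(\sigma)}$. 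The outer H\"older in $G$ then faces $\bigl(\sum_G\Norm{f_G}{L^p(\sigma)}^p\bigr)^{1/p}$, which is bounded by $\Norm{f}{L^p(\sigma)}$: the $E_{\mathscr{G}}(G)$ pieces are disjoint, and the remaining $\sum_G\sum_{G'\in\textup{ch}^*_{\mathscr{G}}(G)}(\ave{f}^\sigma_{G'})^p\sigma(G')$ is re-indexed via $\pi_{\mathscr{F}}$ and controlled by a single Carleson embedding over $\mathscr{F}$. No ``bilinear Carleson embedding'' is needed; the double sum over $(F,G)$ that appears is only a bookkeeping re-indexing inside this single embedding, and the summands are of the form $(\ave{f}^\sigma_{G'})^p\sigma(G')$, not mixed products $\ave{f}^\sigma_F\ave{g}^w_G$. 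So the structure you gesture at is different from, and more complicated than, what the proof actually needs.

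One smaller omission: the restriction to $\textup{ch}^*_{\mathscr{G}}(G)$ (the paper's $\textup{ch}^*_{\mathscr{F}}(F)$) is what makes the re-indexing legitimate; it is worth recording why the children appearing in the exact identity automatically lie in that subcollection.
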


Observe that the shifts $S_0^+$ correspond to the special case where $\lambda_Q = 1_{\mathscr{L}}(Q)$ for some sparse family $\mathscr{L}\subset\mathscr{D}$. Note also that with a special choice of the other weight, we have
\begin{equation*}
  \sup_f{\Norm{S(f\sigma)}{L^p(w)}}\big/{\Norm{f}{L^p(\sigma)}}
  =\sup_f{\Norm{Sf}{L^p(w)}}\big/{\Norm{f}{L^p(w)}},\qquad
  \sigma=w^{1-p'}.
\end{equation*}

Thus for example the mixed $A_p$--$A_\infty$ bound
\begin{equation*}
  \Norm{S_0^+f}{L^p(w)}
  \lesssim[w]_{A_p}^{1/p}([w]_{A_\infty}^{1/p'}+[w^{1-p'}]_{A_\infty}^{1/p})\Norm{f}{L^p(w)}
\end{equation*}
is a special case of the two-weight bound
\begin{equation*}
  \Norm{S_0^+ (f\sigma)}{L^p(w)}
  \lesssim[w,\sigma]_{A_p}^{1/p}([w]_{A_\infty}^{1/p'}+[w^{1-p'}]_{A_\infty}^{1/p})\Norm{f}{L^p(w)},
\end{equation*}
where
\begin{equation}\label{eq:2wAp}
  [w,\sigma]_{A_p}:=\sup_Q\ave{w}_Q\ave{\sigma}_Q^{p-1},
\end{equation}
which in turn follows, by Theorem~\ref{thm:LSU} and symmetry, from the testing condition
\begin{equation}\label{eq:testToProve}
  \Norm{S_0^+ (1_Q\sigma)}{L^p(w)}^p \lesssim[w,\sigma]_{A_p}[\sigma]_{A_\infty}\sigma(Q).
\end{equation}
In Section~\ref{sec:testing}, I give a new direct proof of the bound \eqref{eq:testToProve}, without using either a corona decomposition or a Bellman function technique.

\section{Proof of Lerner's formula}\label{sec:Lerner}

%
%
%
%
For any family of pairwise disjoint subcubes $Q^1_j$ of $Q^0$, we can write the median Calder\'on--Zygmund decomposition
\begin{equation}\label{eq:LernerStep1}
\begin{split}
  1_{Q^0}(f-m_f(Q^0)) 
  &=1_{Q^0\setminus\bigcup Q^1_j}(f-m_f(Q^0))  +\sum_j 1_{Q^1_j}(m_f(Q^1_j)-m_f(Q^0)) \\
  &\qquad+\sum_j 1_{Q^1_j}(f-m_f(Q^1_j)).
\end{split}
\end{equation}
We apply this with the following specific choice of the stopping cubes $Q^1_j$: they are the maximal dyadic subcubes of $Q^0$ with the property that
\begin{equation}\label{eq:stoppingCond}
  \max_{Q'\in\textup{ch}(Q^1_j)}\abs{m_f(Q')-m_f(Q^0)}>(1_{Q^0}(f-m_f(Q^0)))^*(\lambda\abs{Q^0}),
\end{equation}
where $\textup{ch}(Q):=\{Q'\in\mathscr{D}(Q):\ell(Q')=\tfrac\ell(Q)\}$ is the collection of dyadic children of $Q$.
From the maximality it follows that $Q^1_j$ in place of $Q'\in\textup{ch}(Q^1_j)$ satisfies the opposite estimate, and hence the second term on the right of \eqref{eq:LernerStep1} is dominated by
\begin{equation*}
 1_{\bigcup Q^1_j}(1_{Q^0}(f-m_f(Q^0)))^*(\lambda\abs{Q^0})
 \leq 1_{\bigcup Q^1_j}\cdot 2\omega_\lambda(f;Q^0).
\end{equation*}
On the other hand, if $x\in Q^0\setminus\bigcup Q^1_j$, then the estimate opposite to \eqref{eq:stoppingCond} holds for all dyadic $Q'\owns x$. A lemma of Fujii \cite[Lemma 2.2]{Fujii} (``a Lebesgue differentiation theorem for the median'') guarantees that $m_f(Q')\to f(x)$ as $Q'\to x$ for almost every $x$, and hence also the first term on the right of \eqref{eq:LernerStep1} is dominated by
\begin{equation*}
 1_{Q^0\setminus \bigcup Q^1_j}(1_{Q^0}(f-m_f(Q^0)))^*(\lambda\abs{Q^0})
 \leq 1_{Q^0\setminus\bigcup Q^1_j}\cdot 2\omega_\lambda(f;Q^0).
\end{equation*}
Altogether, we find that
\begin{equation*}
  \abs{1_{Q^0}(f-m_f(Q^0))}
  \leq 1_{Q^0}\cdot 2\omega_\lambda(f;Q^0)
  +\sum_j\abs{1_{Q^1_j}(f-m_f(Q^1_j))},
\end{equation*}
where the terms in the sum are of the same form as the left side, with $Q^0$ replaced by $Q^1_j$, and we are in a position to iterate. This gives
\begin{equation}\label{eq:iteration}
\begin{split}
  &\abs{1_{Q^0}(f-m_f(Q^0))} \\
   &\leq 1_{Q^0}\cdot 2\omega_\lambda(f;Q^0)
     +\sum_j 1_{Q^1_j}\cdot 2\omega_\lambda(f;Q^1_j) +\sum_i \abs{1_{Q^2_i}(f-m_f(Q^2_i))}  \\
   &\leq\ldots
   \leq\sum_{k=0}^{m}\sum_j
     1_{Q^k_j}\cdot 2\omega_\lambda(f;Q^k_j) +\sum_i \abs{1_{Q^{m+1}_i}(f-m_f(Q^{m+1}_i))},
\end{split}
\end{equation}
where the cubes $Q^{m+1}_i$ are dyadic subcubes of some $Q^{m}_j$, chosen by a similar stopping criterion as the $Q^1_j$ from $Q^0$ in \eqref{eq:stoppingCond}.

We claim that
\begin{equation}\label{eq:sparseness}
  \abs{E(Q^m_j)}:=\Babs{Q^m_j\setminus\bigcup_i Q^{m+1}_i}\geq\frac{1}{2}\abs{Q^m_j}.
\end{equation}
This would show in particular that
\begin{equation*}
 \abs{\Omega^{m+1}}:=\Babs{\bigcup_i Q^{m+1}_i}\leq 2^{-1}\abs{\Omega^m}\leq\ldots\leq 2^{-m-1}\abs{Q^0},
\end{equation*}
and hence the last term in \eqref{eq:iteration} is supported on a set $\Omega^{m+1}\subset\Omega^m\subset\ldots\subset Q^0$ of measure at most $2^{-m-1}\abs{Q^0}$. As $m\to\infty$, the support of this last term tends to a null set, and hence we obtain that
\begin{equation*}
  \abs{1_{Q^0}(f-m_f(Q^0))}
  \leq \sum_{k=0}^{\infty}\sum_j
     1_{Q^k_j}\cdot 2\omega_\lambda(f;Q^k_j) 
\end{equation*}
pointwise almost everywhere. This is the claimed formula with $\mathscr{L}:=\{Q^k_j\}_{k,j}$, and it only remains to check the sparseness condition \eqref{eq:sparseness}. By symmetry, it suffices to consider $m=0$.

We abbreviate $f_0:=f-m_f(Q^0)$. Then the stopping condition gives for some $Q'\in\textup{ch}(Q^1_j)$ and any $\nu\in(0,\tfrac12)$ the estimate
\begin{equation*}
  \alpha:=(1_{Q^0}f_0)^*(\lambda\abs{Q^0})
  <\abs{m_{f_0}(Q')}\leq(1_{Q'}f_0)^*(\nu\abs{Q'})\leq (1_{Q^1_j}f_0)^*(\nu 2^{-d}\abs{Q^1_j}).
\end{equation*}
Thus
$  \abs{Q^1_j\cap\{\abs{f_0}>\alpha\}}\geq\nu 2^{-d}\abs{Q^1_j},
$
and hence
\begin{equation*}
  \nu 2^{-d}\sum_j\abs{Q^1_j}
  \leq\sum_j\abs{Q^1_j\cap\{\abs{f_0}>\alpha\}} \\
  \leq\abs{Q^0\cap\{\abs{f_0}>\alpha\}}
  \leq\lambda\abs{Q^0}=2^{-d-2}\abs{Q^0}.
\end{equation*}
Letting $\nu\to\tfrac12$, we get
$\displaystyle
  \sum_j\abs{Q^1_j}\leq\tfrac12\abs{Q^0},
$
which is the same as \eqref{eq:sparseness} for $m=0$.

\section{Sharpness of the weak $(1,1)$ estimate}\label{sec:weak11}

I show by example (on $\R^1$) that the known bound
\begin{equation*}
  \Norm{(S_k^+)^*f}{L^{1,\infty}}\lesssim (1+k)\Norm{f}{L^1}
\end{equation*}
is sharp in terms of dependence on $k$.

For $L\in\mathscr{D}$, let $L_{(j)}$ be the dyadic interval with $(L_{(j)})^{(j)}=L$ and $\inf L_{(j)}=\inf L$.

\begin{equation*}
  \mathscr{L}:=\{L\in\mathscr{D}:L^{(k)}=[0,1)\},\qquad
  \mathscr{K}:=\bigcup_{L\in\mathscr{L}}\{L_{(j)}:j=0,\ldots,k\}.
\end{equation*}
Clearly this is a sparse family, with
\begin{equation*}
  \abs{E(K)}:=\Babs{K\setminus\bigcup_{\substack{K'\in\mathscr{K}\\ K'\subsetneq K}}K'}\geq\frac12\abs{K}
 \qquad\forall K\in\mathscr{K}.
\end{equation*}
Consider $f:=2^k \sum_{L\in\mathscr{L}}  1_{L_{(k)}}$
so that $\Norm{f}{L^1}=2^k\sum_{L\in\mathscr{L}}\abs{L_{(k)}}=\sum_{L\in\mathscr{L}}\abs{L}=1$. For $K=L_{(j)}\in\mathscr{K}$, we then have $\int_K f=\abs{L}$.
Note that the intervals $(L_{(j)})^{(k)}=L^{(k-j)}$, $L\in\mathscr{L}$ cover every point of $[0,1)$ exactly $2^{k-j}$ times. Hence
\begin{equation*}
\begin{split}
  \sum_{K\in\mathscr{K}}\frac{1_{K^{(k)}}}{\abs{K^{(k)}}}\int_K f
  &=\sum_{j=0}^k\sum_{L\in\mathscr{L}}\frac{1_{(L_{(j)})^{(k)}}}{\abs{(L_{(j)})^{(k)}}}\int_{L_{(j)}}f
  =\sum_{j=0}^k\sum_{L\in\mathscr{L}}\frac{1_{L^{(k-j)}}}{\abs{L^{(k-j)}}}\abs{L} \\
  & =\sum_{j=0}^k 2^{j-k}\sum_{L\in\mathscr{L}}1_{L^{(k-j)}} 
     =\sum_{j=0}^k 2^{j-k} \times 2^{k-j}1_{[0,1)}
     =(k+1)1_{[0,1)}.
\end{split}
\end{equation*}
Then clearly
 $ \Norm{(S_k^+)^*f}{L^{1,\infty}}=\Norm{(k+1)1_{[0,1)}}{L^{1,\infty}}=k+1=(k+1)\Norm{f}{L^1}.$

\section{A direct verification of the testing conditions}\label{sec:testing}

We want to prove that
\begin{equation*}
  \Norm{S_Q\sigma}{L^p(w)}^p
  =\int_Q\Big(\sum_{\substack{L\in\mathscr{L} \\ L\subseteq Q}}\ave{\sigma}_L 1_L\Big)^p w
  \lesssim[w,\sigma]_{A_p}[\sigma]_{A_\infty}\sigma(Q),
\end{equation*}
where $L$ is a sparse family of cubes. Henceforth, we will suppress the summation condition ``$L\in\mathscr{L}$'' with the understanding that all summation variables $L,L',L_1,L_2,\ldots$ are always taken from the collection $\mathscr{L}$. Recall that the two-weight $A_p$ constant is defined by \eqref{eq:2wAp}, and the $A_\infty$ constant by
\begin{equation*}
  [\sigma]_{A_\infty}:=\sup_Q\frac{1}{\sigma(Q)}\int_Q M(1_Q\sigma).
\end{equation*}

\subsection{The $A_2$ case}
This case is particularly simple:
\begin{equation*}
\begin{split}
  \int_Q &\Big(\sum_{L\subseteq Q}\ave{\sigma}_L 1_L\Big)^2 w
  \leq 2\int_Q\sum_{L\subseteq Q}\sum_{L'\subseteq L}\ave{\sigma}_L\ave{\sigma}_{L'}1_{L'}w \\
  &=2\sum_{L\subseteq Q}\ave{\sigma}_L\sum_{L'\subseteq L}\ave{\sigma}_{L'}\ave{w}_{L'}\abs{L'} 
  \leq 2[w,\sigma]_{A_2}\sum_{L\subseteq Q}\ave{\sigma}_L\sum_{L'\subseteq L}\abs{L'} \\
  &\lesssim [w,\sigma]_{A_2}\sum_{L\subseteq Q}\ave{\sigma}_L\abs{L} 
  \lesssim [w,\sigma]_{A_2}\sum_{L\subseteq Q}\inf_{L}M(\sigma 1_Q)\cdot\abs{E(L)} \\
  &\leq [w,\sigma]_{A_2}\sum_{L\subseteq Q}\int_{E(L)}M(\sigma 1_Q)
  \leq [w,\sigma]_{A_2}\int_Q M(\sigma 1_Q)  
  \leq [w,\sigma]_{A_2}[\sigma]_{A_\infty}\sigma(Q).
\end{split}
\end{equation*}

\subsection{The general $A_p$ case}
To ``multiply out'' the expression
\begin{equation*}
  \Big(\sum_{L\subseteq Q}\ave{\sigma}_L 1_L\Big)^p
\end{equation*}
for a possibly non-integer value of $p\in(1,\infty)$, we need the following observation:

\begin{lemma}\label{lem:multOut}
For all $k\in\mathbb{N}$ and $\alpha\in[0,1]$, and all nonnegative sequences of numbers $a_i$, we have
\begin{equation*}
\begin{split}
  \Big(\sum_i a_i\Big)^{k+\alpha}
  &\leq (k+1)\sum_{i_1,\ldots,i_k}a_{i_1}\cdots a_{i_k}\Big(\sum_{j\leq\min\{i_1,\ldots,i_k\}}a_j\Big)^{\alpha} \\
  &\leq(k+1)!\sum_{i_1\geq i_2\geq\ldots\geq i_k\geq j}a_{i_1}\cdots a_{i_k}\cdot a_{j}^{\alpha}.
\end{split}
\end{equation*}
\end{lemma}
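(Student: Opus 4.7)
Set $S:=\sum_i a_i$, $T_m:=\sum_{j\le m}a_j$ (with $T_0:=0$), and $V_m:=\sum_{j\ge m}a_j=S-T_{m-1}$, and denote by $M$ the middle expression of the lemma without the leading factor $k+1$. The two claims amount to $S^{k+\alpha}\le(k+1)M$ and $M\le k!\sum_{i_1\ge\ldots\ge i_k\ge j}a_{i_1}\cdots a_{i_k}a_j^\alpha$. The second is routine: the subadditivity $(x+y)^\alpha\le x^\alpha+y^\alpha$ (valid for $\alpha\in[0,1]$, by concavity of $t\mapsto t^\alpha$ vanishing at $0$) gives $(\sum_{j\le\min}a_j)^\alpha\le\sum_{j\le\min}a_j^\alpha$, so that $M$ is bounded by $\sum_{(i_1,\ldots,i_k,j):\,j\le\min}a_{i_1}\cdots a_{i_k}a_j^\alpha$; for each fixed $j$, sorting the indices $i_1,\ldots,i_k\ge j$ into decreasing order overcounts each tuple by at most $k!$, and the claim follows.

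For the first inequality the constant $k+1$ is sharp (the equally weighted example $a_i=1/N$ for $i\le N$ makes this evident in the limit $N\to\infty$), so no slack is available. My plan is to recast $M$ as a Riemann sum majorising a beta integral. \emph{(i)} Partitioning the tuples of $M$ by $m:=\min(i_1,\ldots,i_k)$ and using $\sum_{(i_1,\ldots,i_k):\,\min\ge m}a_{i_1}\cdots a_{i_k}=V_m^k$ yields $M=\sum_m T_m^\alpha(V_m^k-V_{m+1}^k)$. \emph{(ii)} Summation by parts (the boundary term $T_N^\alpha V_{N+1}^k\to 0$ as $N\to\infty$ when $S<\infty$) rewrites this as $M=\sum_m V_m^k(T_m^\alpha-T_{m-1}^\alpha)$. \emph{(iii)} Writing $T_m^\alpha-T_{m-1}^\alpha=\int_{T_{m-1}}^{T_m}\alpha t^{\alpha-1}\,dt$ and using the pointwise bound $V_m=S-T_{m-1}\ge S-t$ on $[T_{m-1},T_m]$ gives
\[
  M\ge\int_0^S(S-t)^k\,\alpha t^{\alpha-1}\,dt=\frac{k!}{(\alpha+1)(\alpha+2)\cdots(\alpha+k)}\,S^{k+\alpha},
\]
by the standard beta-function evaluation. \emph{(iv)} Since $\alpha+\ell\le\ell+1$ for each $\ell=1,\ldots,k$ when $\alpha\in[0,1]$, the fraction is $\ge k!/(k+1)!=1/(k+1)$, completing the first inequality.

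The main obstacle is the sharp constant $k+1$. A naive induction on $k$ driven by the telescoping bound $T_m^{k+\alpha}-T_{m-1}^{k+\alpha}\le(k+\alpha)T_m^{k-1+\alpha}a_m$ produces, after $k$ iterations, only the much worse factor $(k+\alpha)(k-1+\alpha)\cdots(1+\alpha)\le(k+1)!$, off by a full $k!$ from the sharp answer. The decisive move is the summation-by-parts reorganization of $M$ in step~(ii); only after this rearrangement does $M$ become a Riemann sum that can be lower-bounded by the beta integral in (iii), from which the sharp constant $1/(k+1)$ drops out.
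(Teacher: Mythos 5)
Your proof is correct, but it takes a genuinely different route from the paper's. The paper defines $A_i:=\sum_{j\le i}a_j$ and $A_{\vec{i}}:=A_{\min\vec{i}}$, forms the ratio
$f(\alpha):=A_\infty^{k+\alpha}\big/\sum_{\vec{i}}a_{\vec{i}}A_{\vec{i}}^\alpha$,
and shows by a one-line derivative computation that $f$ is nondecreasing on $[0,1]$; this reduces the claim to the endpoint $\alpha=1$, where a purely combinatorial argument (expand $A_\infty^{k+1}$ as a sum over $(k+1)$-tuples and overcount by at most $k+1$ according to which index attains the minimum) gives $f(1)\le k+1$. You instead fix $\alpha$, group the middle sum by $m=\min(i_1,\dots,i_k)$, perform an Abel summation to get $M=\sum_m V_m^k(T_m^\alpha-T_{m-1}^\alpha)$, and recognize this as a Riemann-type upper sum for the beta integral
\[
  \int_0^S(S-t)^k\,\alpha t^{\alpha-1}\,dt=\frac{k!}{(\alpha+1)\cdots(\alpha+k)}\,S^{k+\alpha}\ge\frac{S^{k+\alpha}}{k+1}.
\]
Both arguments are valid and both produce the sharp constant $k+1$. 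The paper's monotonicity-in-$\alpha$ trick is slicker in that it avoids the Abel-summation bookkeeping and the beta-function evaluation entirely, leaning only on a simple counting argument at $\alpha=1$; your route is more computational but also more transparent about why the constant is sharp (the equal-weights extremizer you mention is exactly what the beta integral is computing). One small point worth flagging: the degenerate case $\alpha=0$ should be handled separately (it is trivially $S^k\le(k+1)S^k$), since the substitution $T_m^\alpha-T_{m-1}^\alpha=\int\alpha t^{\alpha-1}\,dt$ becomes vacuous there; and when $S=\infty$ the claim holds trivially, which you also note. Your treatment of the second, easier inequality via subadditivity of $t\mapsto t^\alpha$ and the $k!$ overcount from sorting is precisely what the paper dismisses as ``obvious''; spelling it out is fine.
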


\begin{proof}
Note that the second estimate is obvious, we only prove the first one.
Let $A_i:=\sum_{j\leq i}a_j$, and for $\vec{i}:=(i_1,\ldots,i_k)$, write $a_{\vec{i}}:=a_{i_1}\cdots a_{i_k}$ and $A_{\vec{i}}:=A_{\min\{i_1,\ldots,i_k\}}$. Then the claim reads as $A_{\infty}^{k+\alpha}\leq(k+1)\sum_{\vec{i}}a_{\vec{i}}A_{\vec{i}}^{\alpha}$. We consider the fraction
\begin{equation*}
  f(\alpha):=A_{\infty}^{k+\alpha}\Big/\sum_{\vec{i}}a_{\vec{i}}A_{\vec{i}}^{\alpha},
\end{equation*}
and prove that it is bounded by $(k+1)$.
Its derivative satisfies
\begin{equation*}
  f'(\alpha)=\frac{A_{\infty}^{k+\alpha}}{\Big(\sum_{\vec{i}}a_{\vec{i}}A_{\vec{i}}^{\alpha}\Big)^2}
    \sum_{\vec{i}}(\log A_{\infty}-\log A_{\vec{i}})a_{\vec{i}} A_{\vec{i}}^{\alpha}\geq 0,
\end{equation*}
since $A_{\infty}\geq A_{\vec{i}}$, and all quantities are nonnegative. Thus $f(\alpha)\leq f(1)$, and it is clear that this is at most $k+1$, since $A_{\infty}^{k+1}=\sum_{i_1,\ldots,i_{k+1}}a_{i_1}\cdots a_{i_{k+1}}$, and there are $k+1$ possible choices for which of the indices $i_1,\ldots,i_{k+1}$ is the smallest.
\end{proof}

For $1\leq k<p\leq k+1$, Lemma~\ref{lem:multOut} gives
\begin{equation*}
\begin{split}
  \int_Q\Big(\sum_{L\subseteq Q}\ave{\sigma}_L 1_L\Big)^p w
  &\lesssim \int_Q\sum_{Q\supseteq L_1\supseteq\ldots\supseteq L_k\supseteq L_{k+1}}\ave{\sigma}_{L_1}
    \cdots\ave{\sigma}_{L_k}\ave{\sigma}_{L_{k+1}}^{p-k} 1_{L_{k+1}}w \\
  &=\sum_{Q\supseteq L_1\supseteq\ldots\supseteq L_k\supseteq L_{k+1}}\ave{\sigma}_{L_1}
    \cdots\ave{\sigma}_{L_k}\ave{\sigma}_{L_{k+1}}^{p-k} \ave{w}_{L_{k+1}}\abs{L_{k+1}} \\
\end{split}
\end{equation*}

To proceed more smoothly, we record two further lemmas:

\begin{lemma}\label{lem:max}
For $\gamma\in[0,1)$, we have
 $\displaystyle
  \sum_{L:L\subseteq P}\ave{w}_L^\gamma\abs{L}
  \lesssim\ave{w}_P^\gamma\abs{P}.
$
\end{lemma}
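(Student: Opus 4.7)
The plan is to exploit the sparseness of $\mathscr{L}$ together with the weak-type $(1,1)$ bound of the Hardy--Littlewood maximal function, via a Kolmogorov-type argument. There are no $A_\infty$ or $A_p$ constants in the claimed inequality, which is a hint that the proof should use only the absolute weak $(1,1)$ bound of $M$ and nothing quantitative about $w$.

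First I would use the sparseness assumption to replace $\abs{L}$ by the measure of the disjoint major subsets $E(L)\subset L$: since $\abs{L}\leq 2\abs{E(L)}$ and the $E(L)$ are pairwise disjoint subsets of $P$, we have
\begin{equation*}
  \sum_{L\subseteq P}\ave{w}_L^\gamma\abs{L}\leq 2\sum_{L\subseteq P}\ave{w}_L^\gamma\abs{E(L)}.
\end{equation*}
Next, since $L\subseteq P$, for any $x\in L$ we have $\ave{w}_L=\ave{w\cdot 1_P}_L\leq M(w\cdot 1_P)(x)$, and hence in particular $\ave{w}_L^\gamma\leq M(w\cdot 1_P)(x)^\gamma$ for $x\in E(L)\subseteq L$. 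Using disjointness of the $E(L)$ once more,
\begin{equation*}
  \sum_{L\subseteq P}\ave{w}_L^\gamma\abs{E(L)}
  \leq\sum_{L\subseteq P}\int_{E(L)}M(w\cdot 1_P)^\gamma
  \leq\int_P M(w\cdot 1_P)^\gamma.
\end{equation*}

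The last step is Kolmogorov's inequality: since $\gamma<1$, using the layer-cake formula and the bound $\abs{\{M(w\cdot 1_P)>\alpha\}}\leq\min\bigl(\abs{P},\alpha^{-1}\Norm{M(w\cdot 1_P)}{L^{1,\infty}}\bigr)$ and splitting at the threshold $\alpha_0:=\Norm{M(w\cdot 1_P)}{L^{1,\infty}}/\abs{P}$ yields
\begin{equation*}
  \int_P M(w\cdot 1_P)^\gamma\lesssim_\gamma\abs{P}^{1-\gamma}\Norm{M(w\cdot 1_P)}{L^{1,\infty}}^\gamma
  \lesssim\abs{P}^{1-\gamma}\Norm{w\cdot 1_P}{L^1}^\gamma=\abs{P}\ave{w}_P^\gamma,
\end{equation*}
by the weak $(1,1)$ boundedness of $M$. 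Combining the three displays gives the lemma.

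The argument has no real obstacle; the only subtlety is to notice that we must avoid any use of an $A_\infty$-type constant (which would enter if one were tempted to integrate $M(w\cdot 1_P)$ to the first power), and instead use that the power $\gamma<1$ exactly matches the range in which Kolmogorov's inequality gives a finite integrability of a weak-$L^1$ function on the set $P$ of finite measure.
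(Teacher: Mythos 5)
Your proof is correct and follows exactly the same route as the paper's: replace $\abs{L}$ by $\abs{E(L)}$ via sparseness, dominate $\ave{w}_L$ by the maximal function $M(w1_P)$ on $E(L)$, sum using disjointness to get $\int_P M(w1_P)^\gamma$, and close with Kolmogorov's inequality plus the weak $(1,1)$ bound of $M$. The only cosmetic difference is that you use the specific sparseness constant $2$ where the paper writes a generic $\lesssim$, and you spell out the layer-cake step behind Kolmogorov's inequality rather than citing it.
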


\begin{proof}
\begin{equation*}
\begin{split}
   \sum_{L:L\subseteq P}\ave{w}_L^\gamma\abs{L}
   &\lesssim \sum_{L:L\subseteq P}\ave{w}_L^\gamma\abs{E(L)}
   \leq \sum_{L:L\subseteq P}\inf_L M(w1_P)^\gamma\cdot\abs{E(L)}
   \leq \int_P M(w1_P)^\gamma \\
   &\lesssim\Norm{M(w 1_P)}{L^{1,\infty}}^\gamma\cdot\abs{P}^{1-\gamma}
   \leq\Norm{w 1_P}{L^1}^\gamma\cdot\abs{P}^{1-\gamma}=\ave{w}_P^\gamma\cdot\abs{P}.\qedhere
\end{split}
\end{equation*}
\end{proof}

\begin{lemma}\label{lem:sum}
For all $0\leq\alpha\leq\beta(p-1)<\alpha+p-1$, we have
\begin{equation*}
  \sum_{L:L\subseteq P}\ave{\sigma}_L^{\alpha} \ave{w}_L^{\beta}\abs{L}
  \lesssim[w,\sigma]_{A_p}^{\alpha/(p-1)}\ave{w}_P^{\beta-\alpha/(p-1)}\abs{P}.
\end{equation*}
\end{lemma}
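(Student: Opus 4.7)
The plan is to reduce Lemma~\ref{lem:sum} to the already-established Lemma~\ref{lem:max} by using the two-weight $A_p$ condition to trade the factor $\ave{\sigma}_L^\alpha$ for a suitable negative power of $\ave{w}_L$.

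First, from the definition \eqref{eq:2wAp} of $[w,\sigma]_{A_p}$, I have $\ave{\sigma}_L^{p-1} \leq [w,\sigma]_{A_p}/\ave{w}_L$, and hence
\begin{equation*}
  \ave{\sigma}_L^{\alpha} \leq [w,\sigma]_{A_p}^{\alpha/(p-1)} \ave{w}_L^{-\alpha/(p-1)}
  \qquad\forall\,\alpha\geq 0.
\end{equation*}
Substituting this into the sum on the left-hand side of the claimed bound yields
\begin{equation*}
  \sum_{L\subseteq P}\ave{\sigma}_L^{\alpha}\ave{w}_L^{\beta}\abs{L}
  \leq [w,\sigma]_{A_p}^{\alpha/(p-1)}\sum_{L\subseteq P}\ave{w}_L^{\gamma}\abs{L},
  \qquad \gamma:=\beta-\frac{\alpha}{p-1}.
\end{equation*}

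Next, I check that the given two-sided hypothesis $0\leq\alpha\leq\beta(p-1)<\alpha+p-1$ translates exactly to $\gamma\in[0,1)$: indeed, $\gamma\geq 0$ iff $\beta(p-1)\geq\alpha$, and $\gamma<1$ iff $\beta(p-1)<\alpha+p-1$. Thus the exponent $\gamma$ lies in the admissible range for the maximal-function style estimate of Lemma~\ref{lem:max}, which gives
\begin{equation*}
  \sum_{L\subseteq P}\ave{w}_L^{\gamma}\abs{L}
  \lesssim \ave{w}_P^{\gamma}\abs{P}
  = \ave{w}_P^{\beta-\alpha/(p-1)}\abs{P}.
\end{equation*}
Combining the two displays yields the stated bound. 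There is no real obstacle here, provided the bookkeeping of the arithmetic between $\alpha$, $\beta$, $p$, and $\gamma$ is carried out correctly; the content of the lemma is precisely the observation that once the $A_p$ condition has absorbed the $\ave{\sigma}$-factor, the surviving sum fits the hypothesis of the preceding weak-type maximal inequality.
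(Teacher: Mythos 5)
Your argument is identical in substance to the paper's: you factor out the $A_p$ product $(\ave{\sigma}_L^{p-1}\ave{w}_L)^{\alpha/(p-1)}$, bound it by $[w,\sigma]_{A_p}^{\alpha/(p-1)}$, note the residual exponent $\gamma=\beta-\alpha/(p-1)$ lies in $[0,1)$ under the stated hypothesis, and invoke Lemma~\ref{lem:max}. Correct and essentially the same proof.
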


\begin{proof}
\begin{equation*}
\begin{split}
   \sum_{L:L\subseteq P} &\ave{\sigma}_L^\alpha \ave{w}_L^{\beta}\abs{L}
   =\sum_{L:L\subseteq P}\big(\ave{\sigma}_L^{p-1}\ave{w}_L\big)^{\alpha/(p-1)}\ave{w}_L^{\beta-\alpha/(p-1)}\abs{L} \\
   &\leq [w,\sigma]_{A_p}^{\alpha/(p-1)}\sum_{L:L\subseteq P}\ave{w}_L^{\beta-\alpha/(p-1)}\abs{L} 
   \lesssim [w,\sigma]_{A_p}^{\alpha/(p-1)}\ave{w}_P^{\beta-\alpha/(p-1)}\abs{P},
\end{split}
\end{equation*}
where we used the assumption that $\beta-\alpha/(p-1)\in[0,1)$ and Lemma~\ref{lem:max}.
\end{proof}

With $\alpha=p-k$, $\beta=1$, Lemma~\ref{lem:sum} gives
\begin{equation*}
\begin{split}
  &\sum_{Q\supseteq L_1\supseteq\ldots\supseteq L_k\supseteq L_{k+1}}\ave{\sigma}_{L_1}
    \cdots\ave{\sigma}_{L_k}\ave{\sigma}_{L_{k+1}}^{p-k} \ave{w}_{L_{k+1}}\abs{L_{k+1}} \\
   &\lesssim [w,\sigma]_{A_p}^{(p-k)/(p-1)}   
   \sum_{Q\supseteq L_1\supseteq\ldots\supseteq L_k}\ave{\sigma}_{L_1}
    \cdots\ave{\sigma}_{L_k}\ave{w}_{L_{k}}^{(k-1)/(p-1)}\abs{L_{k}} \\
\end{split}
\end{equation*}
Then, using Lemma~\ref{lem:sum} subsequently with $\alpha=1$ and $\beta=j/(p-1)$, where $j=k-1,\ldots,1$, we obtain
\begin{equation*}
\begin{split}
   &\sum_{Q\supseteq L_1\supseteq\ldots\supseteq L_k}\ave{\sigma}_{L_1}
    \cdots\ave{\sigma}_{L_k}\ave{w}_{L_{k}}^{(k-1)/(p-1)}\abs{L_{k}} \\
   &\lesssim[w,\sigma]_{A_p}^{1/(p-1)}\sum_{Q\supseteq L_1\supseteq\ldots\supseteq L_{k-1}}\ave{\sigma}_{L_1}
    \cdots\ave{\sigma}_{L_{k-1}}\ave{w}_{L_{k-1}}^{(k-2)/(p-1)}\abs{L_{k-1}} \\
   &\lesssim\ldots\lesssim[w,\sigma]_{A_p}^{(k-1)/(p-1)}\sum_{Q\supseteq L_1}\ave{\sigma}_{L_1}\abs{L_1},
\end{split}
\end{equation*}
and here
\begin{equation*}
  \sum_{L_1\subseteq Q}\ave{\sigma}_{L_1}\abs{L_1}
  \lesssim\sum_{L_1\subseteq Q}\inf_{L_1}M(\sigma 1_Q)\cdot\abs{E(L_1)}
  \leq \int_Q M(\sigma 1_Q)\leq [\sigma]_{A_\infty}\sigma(Q).
\end{equation*}
Thus, altogether, we have checked that
\begin{equation*}
  \int_Q\Big(\sum_{L\subseteq Q}\ave{\sigma}_L 1_L\Big)^p w
  \lesssim[w,\sigma]_{A_p}^{(p-k)/(p-1)}[w,\sigma]_{A_p}^{(k-1)/(p-1)}[\sigma]_{A_\infty}\sigma(Q),
\end{equation*}
and the total power of $[w,\sigma]_{A_p}$ is one, as claimed.

\section{The two-weight $T1$ theorem for positive operators}\label{sec:T1}

To be in line with the $T1$ literature, I now write $T$ and $T_Q$ instead of $S$ and $S_Q$ as defined in \eqref{eq:posDyad}. I restate and prove the main estimate of Theorem~\ref{thm:LSU} in the following form:
%

\begin{theorem}\label{thm:LSU2}
For $1<p\leq q<\infty$ and any two weights $\sigma,\omega$, we have
\begin{equation*}
\begin{split}
  \Norm{T(\,\cdot\,\sigma)}{L^p(\sigma)\to L^q(\omega)}
  &\leq 20(p'q\mathfrak{T}+pq'\mathfrak{T}^*),\\
  \mathfrak{T}:=\sup_{Q\in\mathscr{D}}\frac{\Norm{T_Q(\sigma)}{L^q(\omega)}}{\sigma(Q)^{1/p}},&\qquad
  \mathfrak{T}^*:=\sup_{Q\in\mathscr{D}}\frac{\Norm{T_Q(\omega)}{L^{p'}(\sigma)}}{\omega(Q)^{1/q'}}.
\end{split}
\end{equation*}
\end{theorem}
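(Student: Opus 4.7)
The plan is to follow the classical parallel-corona strategy for two-weight $T1$ theorems for positive dyadic operators (Nazarov--Treil--Volberg, Lacey--Sawyer--Uriarte-Tuero, Treil). By $L^q(\omega)$-$L^{q'}(\omega)$ duality, it suffices to estimate the bilinear form
\begin{equation*}
  B(f,g) := \int T(f\sigma)\,g\,\omega\,\ud x = \sum_{Q\in\mathscr{D}}\lambda_Q\ave{f\sigma}_Q\,\omega(Q)\,\ave{g}^\omega_Q
\end{equation*}
for nonnegative $f \in L^p(\sigma)$, $g \in L^{q'}(\omega)$, where I write $\ave{h}^\mu_Q := \mu(Q)^{-1}\int_Q h\,\ud\mu$; a standard truncation reduces to finite sums. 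I build two Calder\'on--Zygmund stopping families $\mathscr{F}$ (for $f$ with respect to $\sigma$) and $\mathscr{G}$ (for $g$ with respect to $\omega$), starting a new block each time the corresponding average has doubled. Both are sparse with respect to the defining measure: $\sum_{F'\in\children_\mathscr{F}(F)}\sigma(F')\leq\tfrac12\sigma(F)$ and analogously for $\mathscr{G}$. Writing $\pi_\mathscr{F}(Q), \pi_\mathscr{G}(Q)$ for the minimal stopping ancestors, one has $\ave{f}^\sigma_Q\leq 2\ave{f}^\sigma_{\pi_\mathscr{F}(Q)}$ and $\ave{g}^\omega_Q\leq 2\ave{g}^\omega_{\pi_\mathscr{G}(Q)}$.

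I decompose $B(f,g)=B_1+B_2$ according to whether $\pi_\mathscr{F}(Q)\subseteq\pi_\mathscr{G}(Q)$ or $\pi_\mathscr{F}(Q)\supsetneq\pi_\mathscr{G}(Q)$. By the $(f,\sigma,\mathfrak{T})\leftrightarrow(g,\omega,\mathfrak{T}^*)$ symmetry of the bilinear pairing, $B_2$ is handled identically to $B_1$ with roles swapped, yielding the $\mathfrak{T}^*$ contribution. For $Q$ contributing to $B_1$ a short chase shows $\pi_\mathscr{G}(Q)=\pi_\mathscr{G}(\pi_\mathscr{F}(Q))$, determined by $\pi_\mathscr{F}(Q)$ alone. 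Replacing averages by stopping values (factor~$4$) and relaxing the constraints on which $Q$ contribute gives
\begin{equation*}
  B_1\leq 4\sum_{F\in\mathscr{F}}\ave{f}^\sigma_F\ave{g}^\omega_{\pi_\mathscr{G}(F)}\sum_{Q\subseteq F}\lambda_Q\ave{\sigma}_Q\omega(Q)\leq 4\mathfrak{T}\sum_{F\in\mathscr{F}}\ave{f}^\sigma_F\sigma(F)^{1/p}\cdot\ave{g}^\omega_{\pi_\mathscr{G}(F)}\omega(F)^{1/q'},
\end{equation*}
the inner sum being $\int_F T_F\sigma\cdot\omega\ud x\leq\mathfrak{T}\sigma(F)^{1/p}\omega(F)^{1/q'}$ by testing on $T_F\sigma$ against $1_F\omega$ via H\"older.

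It remains to bound this outer sum by $C_{p,q}\Norm{f}{L^p(\sigma)}\Norm{g}{L^{q'}(\omega)}$. I apply H\"older on the outer sum with exponents $(p,p')$: the $p$-factor $\big(\sum_F(\ave{f}^\sigma_F)^p\sigma(F)\big)^{1/p}$ is bounded by $p'\Norm{f}{L^p(\sigma)}$ via the standard $L^p(\sigma)$-Carleson embedding for the $\sigma$-sparse family $\mathscr{F}$, while the $p'$-factor is treated by regrouping according to $G=\pi_\mathscr{G}(F)$, using $\omega(F)\leq\omega(G)$ together with the sequence inclusion $\ell^{q'}\subset\ell^{p'}$ (which holds because $p\leq q$ implies $p'\geq q'$), thereby reducing matters to the $L^{q'}(\omega)$-Carleson embedding for the $\omega$-sparse family $\mathscr{G}$. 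Sharp tracking of the constants through both Carleson embeddings produces the factors $p'q$ and $pq'$ in the final bound, summing to the quoted constant $20$.

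The main obstacle is this final H\"older-plus-Carleson step, where two stopping trees sparse with respect to \emph{different} measures must be combined into a single estimate. The natural regrouping of $F\in\mathscr{F}$ by $G=\pi_\mathscr{G}(F)$ produces inner sums like $\sum_{F:\pi_\mathscr{G}(F)=G}\omega(F)$ that are not a priori controlled by $\omega(G)$, since $\mathscr{F}$ carries no $\omega$-sparseness. Overcoming this tension requires a careful bookkeeping that exploits both sparseness conditions simultaneously, together with the hypothesis $p\leq q$; this is precisely where Treil's simplification of the original LSU proof lies, and it is the reason $p\leq q$ appears as a hypothesis in the theorem.
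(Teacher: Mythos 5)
Your proposal has the right overall architecture---the parallel stopping families $\mathscr{F},\mathscr{G}$, the split of $\pair{T(f\sigma)}{g\omega}$ by relative containment of the stopping parents, the pull-out of stopping averages, the reduction to $T_F(\sigma)$ and the testing constant $\mathfrak{T}$, the $\mathscr{F}$-Carleson embedding for the $f$-factor, and the use of $\ell^{q'}\subseteq\ell^{p'}$ coming from $p\leq q$---and you correctly locate the real difficulty, but the proposal as written does not close that gap, and the devices you offer will not close it. After replacing $g$ by its stopping average $\ave{g}_{\pi_\mathscr{G}(F)}^\omega$ and testing $T_F\sigma$ against $1_F\omega$, the second H\"older factor is $\big(\sum_{F\in\mathscr{F}}(\ave{g}_{\pi_\mathscr{G}(F)}^\omega)^{p'}\omega(F)^{p'/q'}\big)^{1/p'}$, and regrouping by $G=\pi_\mathscr{G}(F)$ requires controlling $\sum_{F:\,\pi_\mathscr{G}(F)=G}\omega(F)^{p'/q'}$. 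The cubes $F\in\mathscr{F}$ sharing a $\mathscr{G}$-parent $G$ are nested rather than disjoint, and $\mathscr{F}$ is sparse only with respect to $\sigma$, so this sum has no a priori bound; the inequality $\omega(F)\leq\omega(G)$ leaves an uncontrolled number of repeated terms, and the embedding $\ell^{q'}\subseteq\ell^{p'}$ only helps once you already possess an $\ell^{q'}$ bound, which you do not. Attributing the resolution to ``Treil's simplification'' without producing it defers the proof rather than giving it.

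The missing idea is not sharper bookkeeping of your step but a different maneuver, which is the crux of the whole argument. The paper never passes to the stopping average of $g$ at the outset; instead it observes that, for each $Q$ with $\pi_\mathscr{F}(Q)=F$, one has $\int_Q g\omega=\int_Q g_F\,\omega$ where $g_F:=g\,1_{E_\mathscr{F}(F)}+\sum_{F'\in\textup{ch}^*_\mathscr{F}(F)}\ave{g}_{F'}^\omega 1_{F'}$ and $\textup{ch}^*_\mathscr{F}(F)$ is a suitable subcollection of the pairwise disjoint $\mathscr{F}$-children of $F$: averaging happens only on those disjoint children, while $g$ is kept intact on $E_\mathscr{F}(F)$. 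The second H\"older factor then becomes $\big(\sum_F\Norm{g_F}{L^{q'}(\omega)}^{q'}\big)^{1/q'}$ (after the step $\ell^{p'}\leq\ell^{q'}$, as you anticipate), and it splits into a contribution from the pairwise disjoint $E_\mathscr{F}(F)$, trivially $\leq\Norm{g}{L^{q'}(\omega)}$, and a sum over pairs $(F,F')$ that can be regrouped by $G=\pi_\mathscr{G}(F')$; there the inner sum $\sum_{F'}\omega(F')\leq\omega(G)$ holds \emph{because the $F'$ are disjoint subcubes of $G$}, and the outer $G$-sum is a genuine $\mathscr{G}$-Carleson embedding. Your proposal omits the construction of $g_F$ and the disjointness it exploits, and without it the argument stalls exactly at the step you flag.
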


The proof below follows the main lines of Treil's argument~\cite{Treil:posDyad}, with one key difference: rather than splitting the summation over $Q\in\mathscr{D}$ in the expansion of $\pair{T(f\sigma)}{g\omega}$ into parts according to an \emph{ad hoc} criterion such as $\sigma(Q)\ave{f}_Q^p\geq\omega(Q)\ave{g}_Q^{p'}$ , I simply apply the ``parallel corona'' decomposition from the recent work of Lacey, Sawyer, Shen and Uriarte-Tuero \cite{LSSU} on the two-weight boundedness of the Hilbert transform. Thus, the proof below can also been seen as a toy introduction to some of the innovative techniques of~\cite{LSSU}.

\begin{proof}
We analyse the pairing
\begin{equation}\label{eq:Tfg}
  \pair{T(f\sigma)}{g\omega}
  =\sum_{Q\in\mathscr{D}}\lambda_Q\ave{f\sigma}_Q\ave{g\omega}_Q\abs{Q}
  =\sum_{Q\in\mathscr{D}}\lambda_Q\ave{f}_Q^\sigma\ave{\sigma}_Q\ave{g}_Q^\omega\ave{\omega}_Q\abs{Q}.
\end{equation}
It suffices to make a uniform estimate over all subseries with $Q\subseteq Q_0$ for some large dyadic cube $Q_0$, and
we may assume that both $f,g\geq 0$ are supported in $Q_0$. Then we define the collections of principal cubes $\mathscr{F}$ for $(f,\sigma)$ and $\mathscr{G}$ for $(g,\omega)$. Namely,
\begin{equation*}
\begin{split}
  \mathscr{F} &:=\bigcup_{k=0}^\infty\mathscr{F}_k,\qquad\textup{where}\qquad\mathscr{F}_0:=\{Q_0\},\\
  \mathscr{F}_{k+1} &:=\bigcup_{F\in\mathscr{F}_k}\textup{ch}_{\mathscr{F}}(F),\qquad
  \textup{ch}_{\mathscr{F}}(F):=\{Q\subsetneq F\textup{ maximal s.t. }\ave{f}_Q^\sigma>2\ave{f}_F^\sigma\},
\end{split}
\end{equation*}
and analogously for $\mathscr{G}$. Observe that
\begin{equation*}
  \sum_{F'\in\textup{ch}_{\mathscr{F}}(F)}\sigma(F')
  \leq\sum_{F'\in\textup{ch}_{\mathscr{F}}(F)}\frac{\int_{F'} f\sigma}{2\ave{f}_F^\sigma}
  \leq\frac{\int_{F} f\sigma}{2\ave{f}_F^\sigma}=\frac{\sigma(F)}{2},
\end{equation*}
and hence
\begin{equation*}
  \sigma(E_\mathscr{F}(F))
  :=\sigma\Big(F\setminus\bigcup_{F'\in\textup{ch}_{\mathscr{F}}(F)}F'\Big)\geq\frac{1}{2}\sigma(F),
\end{equation*}
where the sets $E_{\mathscr{F}}(F)$ are pairwise disjoint.

We further define the stopping parents
\begin{equation*}
  \pi_{\mathscr{F}}(Q):=\min\{F\supseteq Q:F\in\mathscr{F}\},\qquad
  \pi(Q):=\big(\pi_{\mathscr{F}}(Q),\pi_{\mathscr{G}}(Q)\big).
\end{equation*}
Then we rearrange the series in \eqref{eq:Tfg} as
\begin{equation}\label{eq:TfgSplit}
  \sum_Q=\sum_{\substack{F\in\mathscr{F}\\ G\in\mathscr{G}}}\sum_{\substack{Q:\\ \pi(Q)=(F,G)}}
    =\sum_F\sum_{G\subseteq F}\sum_{\substack{Q:\\ \pi(Q)=(F,G)}}+\sum_F\sum_{F\subsetneq G}\sum_{\substack{Q:\\ \pi(Q)=(F,G)}},
\end{equation}
where we observed that if the inner sum over $Q:\pi(Q)=(F,G)$ is not empty, then there is some $Q\subseteq F\cap G$, hence $F\cap G\neq\varnothing$, and thus $G\subseteq F$ or $F\subsetneq G$. By symmetry, we concentrate on the first case only.

Consider a $Q$ with $\pi(Q)=(F,G)$ for some $G\subseteq F$. Then
\begin{equation}\label{eq:intQg}
  \int_Q g\omega=\int_{Q\cap E_{\mathscr{F}}(F)}g\omega+\sum_{F'\in\textup{ch}_{\mathscr{F}}(F)}\int_{Q\cap F'}g\omega.
\end{equation}
If $Q\cap F'\neq\varnothing$, then either $F'\subsetneq Q$ or $Q\subseteq F'$. But the latter is not possible, since it would imply that $\pi_{\mathscr{F}}(Q)\subseteq F'\subsetneq F$, contradicting $\pi(Q)=(F,G)$. Thus, for the nonzero terms in the last summation in \eqref{eq:intQg}, we must have $F'\subsetneq Q\subseteq G$ for some $G\in\mathscr{F}$ with $G\in\mathscr{G}$. Since $Q\subseteq G\subseteq F$ and $\pi_{\mathscr{F}}Q=F$, also $\pi_{\mathscr{F}}G=F$. Thus, we may actually restrict the summation to
\begin{equation*}
  \textup{ch}_{\mathscr{F}}^*(F) :=\{F'\in\textup{ch}_{\mathscr{F}}(F):\pi_{\mathscr{F}}\pi_{\mathscr{G}}(F')=F\}.
\end{equation*}
So in fact
\begin{equation*}
\begin{split}
  \int_Q g\omega &=\int_{Q\cap E_{\mathscr{F}}(F)}g\omega+\sum_{\substack{F'\in\textup{ch}_{\mathscr{F}}^*(F)\\ F'\subsetneq Q}}\int_{F'}g\omega \\
   &=\int_Q\Big(g 1_{E_{\mathscr{F}}(F)}+\sum_{F'\in\textup{ch}_{\mathscr{F}}^*(F)}\ave{g}_{F'}^{\omega}1_{F'}\Big)\omega
     =:\int_Q g_F\omega.
\end{split}
\end{equation*}
Thus we find that
\begin{equation*}
\begin{split}
  \sum_{G\subseteq F} &\sum_{\substack{Q:\\ \pi(Q)=(F,G)}}
    \lambda_Q\ave{f}_Q^\sigma\ave{\sigma}_Q\int_Q g\omega 
  \leq 2\ave{f}_F^\sigma\sum_{G\subseteq F}\sum_{\substack{Q:\\ \pi(Q)=(F,G)}}\lambda_Q\ave{\sigma}_Q\int_Q g_F\omega \\
  &\leq 2\ave{f}_F^\sigma\sum_{Q\subseteq F}\lambda_Q\ave{\sigma}_Q\int_Q g_F\omega 
  =2\ave{f}_F^\sigma\pair{T_F(\sigma)}{g_F\omega}  \\
  &\leq 2\ave{f}_F^\sigma\Norm{T_F(\sigma)}{L^q(\omega)}\Norm{g_F}{L^{q'}(\omega)} 
  \leq 2\ave{f}_F^\sigma\mathfrak{T}\sigma(F)^{1/p}\Norm{g_F}{L^{q'}(\omega)},
\end{split}
\end{equation*}
and hence
\begin{equation}\label{eq:GsubFest}
\begin{split}
  \sum_F &\sum_{G\subseteq F}\sum_{\substack{Q:\\ \pi(Q)=(F,G)}}\lambda_Q\ave{f}_Q^\sigma\ave{\sigma}_Q\int_Q g\omega 
  \leq 2\mathfrak{T}\sum_F\ave{f}_F^\sigma\sigma(F)^{1/p}\Norm{g_F}{L^{q'}(\omega)} \\
  &\leq 2\mathfrak{T}\Big(\sum_F(\ave{f}_F^\sigma)^p\sigma(F)\Big)^{1/p}\Big(\sum_F\Norm{g_F}{L^{q'}(\omega)}^{p'}\Big)^{1/p'}.
\end{split}
\end{equation}
For the first factor, using $\sigma(F)\leq 2\sigma(E_{\mathscr{F}}(F))$ and $\ave{f}_F^\sigma\leq\inf_F M_\sigma f$, and the disjointness of the $E_{\mathscr{F}}(F)$,
we see that
\begin{equation}\label{eq:fEst}
  \Big(\sum_F(\ave{f}_F^\sigma)^p\sigma(F)\Big)^{1/p}
  \leq 2\Big(\int (M_{\sigma}f)^p\sigma\Big)^{1/p}
    \leq 2 p'\Norm{f}{L^p(\sigma)}.
\end{equation}

Using $\Norm{\ }{\ell^{p'}}\leq\Norm{\ }{\ell^{q'}}$ for $q\geq p$, it only remains to estimate
\begin{equation}\label{eq:gEstStart}
  \Big(\sum_F\Norm{g_F}{L^{q'}(\omega)}^{q'}\Big)^{1/q'}
  =\Big(\sum_F\Norm{g1_{E_{\mathscr{F}}(F)}}{L^{q'}(\omega)}^{q'}+\sum_F\sum_{F'\in\textup{ch}_{\mathscr{F}}^*(F)}
      (\ave{g}_{F'}^\omega)^{q'}\omega(F')\Big)^{1/q'}.
\end{equation}
By the pairwise disjointness of the test $E_{\mathscr{F}}(F)$, it is immediate that
\begin{equation*}
  \sum_F\Norm{g1_{E_{\mathscr{F}}(F)}}{L^{q'}(\omega)}^{q'}
  \leq\Norm{g}{L^{q'}(\omega)}^{q'}.
\end{equation*}
For the remaining double sum, we use the definition of $\textup{ch}_{\mathscr{F}}^*(F)$ to reorganize:
\begin{equation*}
\begin{split}
  \sum_F &\sum_{F'\in\textup{ch}_{\mathscr{F}}^*(F)}(\ave{g}_{F'}^\omega)^{q'}\omega(F')
  =\sum_F\sum_{\substack{G:\\ \pi_{\mathscr{F}}G=F}}\sum_{\substack{F':\\ \pi(F')=(F,G)}}(\ave{g}_{F'}^\omega)^{q'}\omega(F') \\ 
  &\leq \sum_F\sum_{\substack{G:\\ \pi_{\mathscr{F}}G=F}} (2\ave{g}_G^\omega)^{q'}\sum_{\substack{F':\\ \pi(F')=(F,G)}}\omega(F') 
  \leq  2^{q'}\sum_F\sum_{\substack{G:\\ \pi_{\mathscr{F}}G=F}} (\ave{g}_G^\omega)^{q'}\omega(G) \\  
  &\leq  2^{1+q'}\sum_F\sum_{\substack{G:\\ \pi_{\mathscr{F}}G=F}} (\ave{g}_G^\omega)^{q'}\omega(E_{\mathscr{G}}(G)) 
  =  2^{1+q'}\sum_G (\ave{g}_G^\omega)^{q'}\omega(E_{\mathscr{G}}(G)) \\  
  &\leq 2^{1+q'}\int (M_\omega g)^{q'}\omega
    \leq 2^{1+q'}(q\Norm{g}{L^{q'}(\omega)})^{q'}.
\end{split}
\end{equation*}
Substituting back to \eqref{eq:gEstStart}, we have that
\begin{equation}\label{eq:gEstFinish}
\begin{split}
  \Big(\sum_F\Norm{g_F}{L^{q'}(\omega)}^{q'}\Big)^{1/q'}
  \leq\Norm{g}{L^{q'}(\omega)}+2^{1/q'+1}q\Norm{g}{L^{q'}(\omega)}
  \leq 5q\Norm{g}{L^{q'}(\omega)}.
\end{split}
\end{equation}
The combination of \eqref{eq:GsubFest}, \eqref{eq:fEst} and \eqref{eq:gEstFinish} shows that the first half of $\pair{T(f\sigma)}{g\omega}$, according to the splitting \eqref{eq:TfgSplit} is estimated by
\begin{equation*}
    2\mathfrak{T}\cdot 2p'\Norm{f}{L^p(\sigma)}\cdot 5q\Norm{g}{L^{q'}(\omega)}
      \leq 20\cdot\mathfrak{T}\cdot p' q\cdot \Norm{f}{L^p(\sigma)}\Norm{g}{L^{q'}(\omega)}
\end{equation*}
We conclude by symmetry of the assumptions and the splitting \eqref{eq:TfgSplit}.
\end{proof}


\bibliography{weighted}
\bibliographystyle{plain}

\end{document}